\newtheorem{defn}{Definition}[section]
 \newtheorem{prop}{Proposition}
  \newtheorem{lem}{Lemma}
  \newtheorem{thm}{Theorem}[section]
  \newtheorem{cor}[thm]{Corollary}
  \newcommand{\Z}{\mathbb Z}
  \newtheorem{exam}{Example}
\newenvironment{prf}{\noindent{\bf{Proof:}}~~}{\hfill\rule{1ex}{1ex}\vskip1.5ex}
\newcommand{\N}{\mathbb N}
\newcommand{\beqa}{\begin{eqnarray}}
\newcommand{\enqa}{\end{eqnarray}}
\newcommand{\beq}{\begin{eqnarray*}}
\newcommand{\enq}{\end{eqnarray*}}
\newtheorem{qn}{Question}[section]
\begin{document}

\begin{center}\Large{\bf{A complete radical formula  and  2-primal modules}}

\end{center}
\vspace*{0.3cm}
\begin{center}

David Ssevviiri\\

\vspace*{0.3cm}
Department of Mathematics\\
Makerere University, P.O BOX 7062, Kampala Uganda\\
E-mail: ssevviiri@cns.mak.ac.ug, ssevviirid@yahoo.com
\end{center}

\begin{abstract}

 We introduce a complete radical formula for modules over  non-commutative rings which is 
  the equivalence of a radical formula in the    setting of modules defined over commutative rings. 
  This   gives a   general frame work through which   known results about modules over commutative rings that
   satisfy the radical formula are retrieved. Examples and properties of modules that satisfy
  the complete radical formula  are given.  For instance, it is shown that a module that satisfies the complete radical formula is completely
  semiprime if and only if it is a subdirect product of completely prime modules. This generalizes a ring theoretical result: a ring is reduced
  if and only if it is a subdirect product of domains. We settle in affirmative a conjecture by Groenewald and the current author given
  in \cite{2p} that a module over    a 2-primal ring is 2-primal.
  More instances where 2-primal modules behave like modules over commutative rings are given. This is in tandem with the behaviour of 2-primal rings of 
  exhibiting tendencies of commutative rings.  We end with some questions about the role of 2-primal rings in algebraic geometry.  
  \end{abstract}

{\bf Keywords}: 2-primal rings, 2-primal modules, modules that satisfy the radical formula,  modules that satisfy the 
 complete radical formula.\\

{\bf MSC 2010 Subject Classification}: 16S90; 08B26; 13C05; 16D80; 16N60; 13C10; 14A22

\section{Introduction}

  A proper ideal $\mathcal{P}$ of a ring $R$ is a {\it prime} (resp.  {\it completely prime}) ideal if for all ideals $\mathcal{A}, \mathcal{B}$
  (resp. elements $a, b$) of $R$ such that 
  $\mathcal{AB}\subseteq \mathcal{P}$  (resp. $ab\in \mathcal{P}$), we have either $\mathcal{A}\subseteq \mathcal{P}$ or
  $\mathcal{B}\subseteq \mathcal{P}$ (resp. $a\in P$ or $b\in P$). The prime radical
  (resp. completely prime radical) of  a ring $R$ is the intersection of all prime (resp. completely prime) ideals of $R$.  
 Let $\mathcal{N}(R)$, $\beta(R)$ and $\beta_{co}(R)$ denote the set of all nilpotent elements of $R$, the prime radical of $R$ and 
the completely prime radical of $R$ (also called the generalized nilradical of $R$). Let $N$  be a submodule of an $R$-module $M$.
The envelope of a submodule $N$ of an $R$-module $M$ is the set $$E_M(N):=\{rm: r\in R, m\in M~\text{and}~r^km\in N~\text{for some}~k\in \N\}.$$
In general, $E_M(N)$ is not a
 submodule of $M$. We denote a submodule of $M$ generated by the envelope of $N$ by $\langle E_M(N)\rangle$.  The elements of $\langle E_M(0)\rangle$
 are called nilpotent elements of $M$.\\

If $R$ is a commutative ring  (or a 2-primal ring), then

\begin{equation}\label{eqn1}
  E_R(0) =\mathcal{N}(R)=\beta(R)=\beta_{co}(R).  
\end{equation}

 A desire to have   Equation  (\ref{eqn1}) (or parts of Equation (\ref{eqn1})) in the module setting, forms the basis of our study in this paper. 
 In literature, there are studies about the equivalences $\langle E_M(N)\rangle =\beta^s(N)$ and $\beta(M)=\beta_{co}(M)$ 
 in which  case one says the   submodule $N$ 
  of a module $M$  satisfies the radical formula and  a module $M$  is 2-primal respectively. $\beta^s(N)$ denotes the intersection
  of all prime submodules of a module  $M$ containing  a submodule $N$ and $\beta(M)$ (resp. $\beta_{co}(M)$) denotes the intersection of all prime (resp.
   completely prime) submodules of  a module $M$.   Whereas we supplement   studies of these two equivalences,  
   we also introduce   submodules that satisfy the complete radical formula, i.e., submodules $N$ of modules $M$ for which
  $\langle E_M(N)\rangle =\beta_{co}^s(N)$, where $\beta_{co}^s(N)$ is the intersection of all 
  completely prime submodules of a module $M$ containing a submodule $N$.
  This, naturally generalizes the notion of submodules that satisfy the radical formula in modules over commutative
   rings to modules over non-commutative rings. For an arbitrary ring, Levitzki showed that the set of all strongly nilpotent elements coincides with 
   the prime radical of that ring. We  give examples of modules that satisfy the module analogue of Levitzki result.  \\
   
  All rings are unital and  associative. The modules are left modules defined  over rings.

  \subsection*{Paper road map}
  
  This paper has seven sections.  We give the introduction in  Section 1. Preliminary results  which are needed later in the sequel are given in
  Sections 2 and 3.   Section 2 focuses on 2-primal rings and some of their properties, whereas in Section 3 we give relevant information about
  module analogues of    well known notions in ring theory. They include: prime modules, completely prime modules, modules that satisfy the radical
  formula and 2-primal modules.   It is in Section 4 that we introduce the complete radical formula of modules. As examples, it is shown that the 
  following modules satisfy    the complete radical formula: a projective and 2-primal module [Theorem \ref{TT1}], a finitely 
   generated module over a 2-primal ring [Theorem \ref{FG}], a completely prime module and the regular module $_RR$ when $R$ is a 2-primal ring
   [Theorem \ref{pl}].    In Corollaries \ref{11} and \ref{cv},  we have given several modules which are projective and 2-primal.  Furthermore, 
   all  the 
   modules given above   also satisfy both   the radical formula as well as  the module    analogue of Levitzki result for rings.  
  A ring $R$ satisfies a complete radical formula if every
  $R$-module satisfies the complete radical formula. It is shown that every semisimple 2-primal ring satisfies the complete radical formula [Corollary
  \ref{cc}]. We give a new characterization of 2-primal rings. A ring $R$ is 2-primal if and only if $\beta(R)=E_R(0)$ [Theorem \ref{prim}].
  In Section 5, we give an application of modules that satisfy the complete radical formula. If a module satisfies the 
  complete radical  formula, then it is completely semiprime if and only if it is a subdirect product of completely prime modules [Theorem \ref{rf}].
  In Section 6, we prove in affirmative a conjecture posed in \cite{2p}; it states that a module over a 2-primal ring  is 2-primal.  In Section 7, which
   is the last section, we give some information which inhibits the use of non-commutative rings in algebraic geometry. However, given the behaviour
   of 2-primal rings, i.e., having behavioural tendencies of commutative rings, we pose some questions on possible of using  2-primal rings  
    in algebraic geometry.

\section{2-primal rings}

\begin{defn}
 A ring $R$ is 2-primal if $$\mathcal{N}(R)=\beta(R).$$ 
\end{defn}

All commutative rings and all reduced rings are 2-primal.  The class of 2-primal rings has been widely studied, see for instance 
\cite{Gary, Marks, Marks2, Marks3, Shin} among others.

\begin{prop}\label{p}{\rm \cite[Proposition 2.1]{Gary}}
 Let $R$ be a ring, the following statements are equivalent:
 \begin{enumerate} 
  \item $R$ is 2-primal,
  \item $\beta_{co}(R)=\beta(R)$.
 \end{enumerate}

\end{prop}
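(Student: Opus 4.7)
My plan is to exploit the chain of inclusions
\[
\beta(R) \;\subseteq\; \mathcal{N}(R) \;\subseteq\; \beta_{co}(R),
\]
which holds in every associative ring. The first inclusion is the standard fact that every element of the prime radical is nilpotent. The second follows because if $a^n=0$ and $P$ is any completely prime ideal, then $a^n=0\in P$ forces $a\in P$ by a straightforward induction on $n$ using the definition of complete primeness. In addition, $\beta(R)\subseteq \beta_{co}(R)$ is trivial since every completely prime ideal is prime.

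Given these inclusions, the implication $(2)\Rightarrow (1)$ is essentially free: if $\beta_{co}(R)=\beta(R)$, then $\mathcal{N}(R)\subseteq \beta_{co}(R)=\beta(R)\subseteq \mathcal{N}(R)$, forcing $\mathcal{N}(R)=\beta(R)$. The substantive direction is $(1)\Rightarrow (2)$. Assuming $R$ is 2-primal, the set $\mathcal{N}(R)=\beta(R)$ is an ideal and the quotient $\bar R:=R/\beta(R)$ is reduced. Since $\mathcal{N}(R)\subseteq \beta_{co}(R)$, every completely prime ideal of $R$ already contains $\beta(R)$, so the completely prime ideals of $R$ correspond bijectively to those of $\bar R$, and consequently $\beta_{co}(R)/\beta(R)=\beta_{co}(\bar R)$. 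It therefore suffices to show $\beta_{co}(\bar R)=0$. For this I would invoke the classical fact that in a reduced ring every minimal prime is completely prime; since $\bar R$ is semiprime, its prime radical $\beta(\bar R)=0$ coincides with the intersection of its minimal primes, each of which is now completely prime, so $\beta_{co}(\bar R)\subseteq \beta(\bar R)=0$, as required.

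The main obstacle is this appeal to the theorem that minimal primes of a reduced ring are completely prime (the non-commutative version being due to Shin). If I wanted a self-contained proof, I would take a minimal prime $P$ of $\bar R$ and elements $a,b$ with $ab\in P$ and $a\notin P$, use the characterization of minimal primes via a maximal multiplicatively closed set disjoint from $P$, enlarge it by inserting powers of $a$, and use reducedness to verify that $0$ still lies outside the enlarged set; minimality of $P$ would then place $b$ in $P$. This is the only non-trivial input; once it is in place, the bijection of completely prime ideals above $\beta(R)$ and the reduction to $\bar R$ are formal.
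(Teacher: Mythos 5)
The paper does not prove this proposition; it is cited from Birkenmeier--Heatherly--Lee, so there is no internal argument to compare against. Your proof is correct and is almost certainly the standard route. The chain $\beta(R)\subseteq\mathcal{N}(R)\subseteq\beta_{co}(R)$ is established exactly as you say (the middle inclusion by induction on the nilpotency index against complete primeness), and $(2)\Rightarrow(1)$ is then squeezed out for free. For $(1)\Rightarrow(2)$, passing to the reduced quotient $\bar R=R/\beta(R)$, observing that every completely prime ideal of $R$ already contains $\mathcal{N}(R)=\beta(R)$ so that $\beta_{co}(R)/\beta(R)=\beta_{co}(\bar R)$, and then invoking that minimal primes of a reduced ring are completely prime (equivalently, that a reduced ring is a subdirect product of domains, Andrunakievich--Rjabuhin) correctly identifies the single non-formal input and closes the argument, since $\beta_{co}(\bar R)$ is then contained in the intersection of the minimal primes, which is $\beta(\bar R)=0$.

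One caution about your closing sketch of that lemma: for a non-commutative ring, the complement $R\setminus P$ of a prime ideal is an $m$-system, not in general a multiplicatively closed set, and minimal primes are characterized by maximal $m$-systems rather than maximal multiplicative sets. So ``enlarge the multiplicative set disjoint from $P$ by powers of $a$'' does not carry over verbatim. The standard non-commutative argument forms the multiplicative closure $S$ of $R\setminus P$ and uses reducedness --- specifically that reduced rings are symmetric and have the insertion-of-factors property, so $a_1\cdots a_n=0$ forces $a_1Ra_2R\cdots Ra_n=0\subseteq P$ and hence some $a_i\in P$ --- to show $0\notin S$; one then extracts a completely prime ideal disjoint from $S$ inside $P$ and uses minimality. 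Since you present this only as a fallback sketch and the high-level reduction is sound, this is a minor blemish rather than a gap.
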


An ideal $\mathcal{I}$ of a ring $R$ is {\it 2-primal} if    
\begin{equation}\label{l}
 \beta_{co}(R/\mathcal{I})=\beta(R/\mathcal{I}).
\end{equation}

 It follows that a ring is 2-primal if and only if its zero ideal is 2-primal.
 Equality  (\ref{l}) is the basis for the definition of 2-primal submodules, see Definition \ref{d1}.\\

The class of 2-primal rings is large. It contains many classes of generalizations of commutative rings:
 symmetric rings, IFP/SI rings, reversible rings, PSI rings, semi-symmetric rings, etc. For  examples and chart of implications
  among these classes, see \cite{Gary} and \cite{Marks}.\\

2-primal rings behave like commutative rings. For instance, just like commutative rings, they possess the following properties:
 
 \begin{enumerate}
  \item their sets of all nilpotent elements are ideals;
  \item they are Dedekind finite, i.e., if  $R$ is a 2-primal ring and $a, b\in R$ such that $ab=1$, then $ba=1$;  
  \item if   $R$ is a 2-primal ring, then the ring $R/\beta(R)$ is reduced, and hence it is IFP
  (i.e.,  if     $a, b\in R$, then $ab\in \beta(R)$ implies that $aRb\subseteq \beta(R)$),
  reversible (i.e.,  if      $a, b\in R$, then $ab\in \beta(R)$ implies that $ba\in \beta(R)$)
  and symmetric (i.e., if     $a, b, c\in R$, then $abc\in \beta(R)$ implies that $acb\in \beta(R)$);
  
  \item they satisfy  Kothe conjecture, i.e., the sum of two nil left ideals is nil;
  \item prime ideals are  completely prime and hence are strongly prime, strictly prime, $l$-prime and $s$-prime;
   
  \item they cannot be full matrix rings, \cite[p. 495]{Marks};
  \item the equality $$\mathcal{N}_s(R)=E_R(0) =\mathcal{N}(R)=\beta(R)=\beta_{co}(R)$$ holds, where 
  $\mathcal{N}_s(R)$ is the set of all strongly nilpotent 
   elements of $R$, see Corollary \ref{2m};
  \item semisimple 2-primal rings satisfy the radical formula, see Corollaries \ref{cb} and \ref{cc}, and Proposition \ref{da}.
  
\end{enumerate}
 
  \begin{defn}{\rm \cite[Definition 19]{Oliver}, \cite[p. 742]{Li}}
 A filtered ring $A$ is said to be almost commutative if the associated graded ring, $gr A=\bigoplus_{i\in I}(A_i/A_{i-1})$ is commutative.
\end{defn}
  
Basic examples of almost commutative rings involve rings of differential operators and  universal enveloping algebras.
\begin{exam}
 The almost commutative rings: the universal enveloping algebra of any Lie algebra over a field and 
 the ring of differential operators are reduced rings and hence 2-primal.
\end{exam}

We are then led to ask the following question:

\begin{qn}
 Is every  almost commutative ring 2-primal?
\end{qn}

\section{The module analogues}

In this section, we introduce module analogues of   prime rings,  completely prime rings (domains),  2-primal rings and modules that mimic 
the equivalence $\mathcal{N}(R)=\beta(R)$ in commutative rings.
 
 \subsection{Prime and completely prime modules}

\begin{defn}{\rm \cite{Dauns}}
 A submodule $P$ of an $R$-module $M$ is a prime submodule if $RM\not\subseteq P$ and for all ideals  $\mathcal{A}$ of $R$ and submodules
 $N$ of $M$ such that  $\mathcal{A}N\subseteq P$, we have $N\subseteq P$ or $\mathcal{A}M\subseteq P$.
\end{defn}

\begin{defn}{\rm \cite[Definition 2.1]{cp}}
 A submodule $P$ of an $R$-module $M$ is a completely prime submodule if $RM\not\subseteq P$ and for all elements  $r\in R$ and  
 $m\in M$ such that  $rm\in P$, we have $m\in P$ or $rM\subseteq P$.
\end{defn}

\begin{defn}
 A proper submodule $P$ of an $R$-module $M$ is a semiprime (resp. completely semiprime) submodule of $M$, if $RM\not\subseteq P$ and 
 for all $a\in R$ and $m\in M$, $aRam\subseteq P$   (resp. $a^2m\in P$) implies that $am\in P$.
\end{defn}

A module is prime (resp. completely prime, semiprime, completely semiprime) if its zero submodule is a prime (resp. completely prime, semiprime, 
completely semiprime) submodule.
The prime (resp. completely prime) radical of a submodule $N$ of an $R$-module $M$ 
is the intersection
of all  prime (resp. completely prime) submodules of $M$ containing $N$. We denote the prime (resp.  completely prime) radical of a nonzero submodule $N$
by  $\beta^s(N)$ (resp. $\beta_{co}^s(N)$).
Otherwise, if $N=0$, we write $\beta(M)$ (resp. $\beta_{co}(M)$) and call $\beta(M)$ (resp.  $\beta_{co}(M)$)
the  prime (resp. completely prime) radical of $M$. If $M$  has no prime (resp. completely prime)  submodules,
we write $\beta(M)=M$ (resp. $\beta_{co}(M)=M$).\\

A ring $R$ is prime (resp. completely prime, semiprime, completely semiprime)  if and only if the $R$-module $R$ is 
(resp. completely prime, semiprime, completely semiprime).
Any completely prime submodule is prime. Every maximal submodule is a prime submodule but it need not be  completely prime.
A torsion-free module, a simple module which is Lee-Zhou reduced  and a projective module over a domain are completely prime modules, see
\cite[Examples 2.2]{cp}, \cite[Example 2.3]{cp} and \cite[Example 3.10]{DS}. An indecomposable 
projective module over a hereditary Artin algebra is a completely prime module. To 
see this, if $M$ is an indecomposable projective module over a hereditary Artin algebra $R$, then by \cite[Proposition 5.1.1]{AR}
every nonzero map $f\in \text{End}_R(M)$ is a monomorphism. It follows from \cite[Proposition 3.1]{DS} that $M$ is a completely prime module.\\

Let $N$   be a submodule of  an $R$-module $M$, by $(N: M)$ we denote the ideal $$\{r\in R~:~ rM\subseteq N\}$$ of $R$ which is the annihilator
of the factor $R$-module $M/N$.

\begin{prop}
 A submodule $N$  of an $R$-module $M$ is completely prime  (resp. prime) if and only if $P=(N:M)$ is a completely prime  (resp. prime) ideal of $R$ and 
 the $R/P$-module $M/N$
 is torsion-free.
\end{prop}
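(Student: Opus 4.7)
The plan is to reduce the statement to the quotient module $M/N$, which carries a natural $R/P$-module structure because $P=(N:M)$ is precisely the annihilator of $M/N$ in $R$. Under this reduction, $N$ being a completely prime (resp. prime) submodule of $M$ translates into the zero submodule of $M/N$ being completely prime (resp. prime) as an $R/P$-submodule, so the claim becomes: this happens iff $P$ is a completely prime (resp. prime) ideal and $M/N$ is torsion-free over $R/P$. I would read ``torsion-free'' in the domain case (completely prime $P$) as the standard condition $\bar r\bar m=0\Rightarrow \bar r=0$ or $\bar m=0$, and in the prime case as the ideal-submodule analogue: $\bar{\mathcal A}\bar K=0$ for an ideal $\bar{\mathcal A}\lhd R/P$ and a submodule $\bar K\subseteq M/N$ forces $\bar{\mathcal A}=0$ or $\bar K=0$.

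For the forward direction, assume $N$ is completely prime. To show $P$ is a completely prime ideal, suppose $ab\in P$ but $b\notin P$; then $bm_{0}\notin N$ for some $m_{0}\in M$, while $a(bm_{0})=abm_{0}\in N$, so the completely prime condition on $N$ gives $aM\subseteq N$, i.e.\ $a\in P$. Torsion-freeness is then immediate: if $rm\in N$, the completely prime hypothesis yields $m\in N$ or $rM\subseteq N$, i.e.\ $\bar m=0$ or $\bar r=0$ in $M/N$ over $R/P$. The prime case is parallel: if $\mathcal A\mathcal B\subseteq P$ with $\mathcal B\not\subseteq P$, then $K:=\mathcal B M\not\subseteq N$ and $\mathcal A K\subseteq N$, so primeness of $N$ forces $\mathcal A M\subseteq N$, hence $\mathcal A\subseteq P$; and the torsion-freeness condition is a verbatim restatement of the defining property of a prime submodule in the $R/P$-module language.

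For the converse, note first that $P\ne R$ in either case (both completely prime and prime ideals are proper), so $RM\not\subseteq N$, securing the nondegeneracy clause in the definitions of prime/completely prime submodules. In the completely prime case, if $rm\in N$ then $\bar r\bar m=0$ in $M/N$; since $R/P$ is a domain and $M/N$ is torsion-free over it, we obtain $\bar r=0$ or $\bar m=0$, i.e.\ $rM\subseteq N$ or $m\in N$. The prime case is handled analogously with the ideal-submodule reading: given $\mathcal A K\subseteq N$ with $K\not\subseteq N$, reduce modulo $N$ and $P$ to get $\bar{\mathcal A}\bar K=0$ with $\bar K\ne 0$; torsion-freeness forces $\bar{\mathcal A}=0$, i.e.\ $\mathcal A\subseteq P$, equivalently $\mathcal A M\subseteq N$.

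The main subtlety, and the place I would take care in the write-up, is fixing the meaning of ``torsion-free'' in the prime case, where $R/P$ is only a prime ring rather than a domain: with the naive element-wise reading the converse can fail, because $\bar r\bar m=0$ need not imply $\bar r(\bar R\bar m)=0$. Adopting the ideal-submodule interpretation above makes both directions of the prime case a clean transcription of the definition of prime submodule, and once this convention is in place the proof is a routine unpacking of definitions with no further technical input.
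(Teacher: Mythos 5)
The paper states this proposition without proof, so there is nothing in the source to compare against; your argument therefore has to be assessed on its own merits. It is correct, and the reduction to the zero submodule of the $R/P$-module $M/N$ is the natural route: both directions are then routine unpackings of the definitions once one has fixed a meaning for ``torsion-free'', and you handle the completely prime case and the ideal-submodule reading of the prime case cleanly (including the nondegeneracy clause $RM\not\subseteq N$ coming from $P\ne R$).

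One small point in your closing discussion is misplaced. With the naive element-wise reading of torsion-free over the prime ring $R/P$, it is the \emph{forward} direction of the prime case that fails, not the converse. Indeed, if $R/P$ is prime but not a domain it has $\bar a,\bar b\ne 0$ with $\bar a\bar b=0$; then for any nonzero $\bar m\in M/N$ either $\bar b\bar m=0$ or $\bar a(\bar b\bar m)=0$ exhibits element-wise torsion, so no nonzero $R/P$-module is element-wise torsion-free. Hence for a prime submodule $N$ whose associated ideal $P=(N:M)$ is prime but not completely prime (e.g.\ $M={}_RR$ with $R$ a prime ring that is not a domain and $N=0$), $M/N$ is nonzero and cannot be element-wise torsion-free, so the forward implication breaks. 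The converse with the element-wise reading actually does hold: given $\mathcal AL\subseteq N$ and $l_0\in L\setminus N$, each $a\in\mathcal A$ satisfies $\bar a\bar l_0=0$ with $\bar l_0\ne 0$, forcing $\bar a=0$, i.e.\ $\mathcal A\subseteq P$; it is simply vacuous when $P$ is not completely prime, by the observation above. Your chosen ideal-submodule interpretation is the right one and makes both directions go through, so the proof stands; only the diagnosis of which direction the naive reading breaks needs to be flipped.
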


\subsection{The radical formula of modules}

The equality $E_R(0)=\beta(R)$ 
from Equation (\ref{eqn1}) for commutative rings, motivated  McCasland and Moore in \cite{MM} to introduce (sub)modules 
that satisfy the radical formula.  On the other hand, the equality $\beta(R)=\beta_{co}(R)$ for 2-primal rings  motivated Groenewald and the current 
author in \cite{2p} to define 2-primal modules. In this subsection, we define, give examples and compare these  two  types of modules.
 
\begin{defn}
 A submodule $N$ of an $R$-module $M$ satisfies the radical formula if $$\langle E_M(N)\rangle=\beta^s(N).$$ A module $M$ satisfies the 
 radical formula if every submodule of $M$ satisfies the radical formula. A ring $R$ satisfies the radical formula if every $R$-module satisfies the radical
 formula.
\end{defn}

Modules  and rings that satisfy the radical formula have been widely studied, see \cite{Yilmaz, JS, LM, S, MM, NA, ST, SSM} among others.
A projective module over a commutative ring \cite[Corollary 8]{JS},  a module over a Dedekind integral
 domain  \cite[Theorem 9]{JS} and  a representable module (and hence an Artinian module) over a commutative ring
 satisfy the radical formula \cite[Theorem 9]{ST}.    A semisimple  commutative ring  and an Artinian commutative ring \cite{SSM}
    satisfy the radical formula.    Not all modules defined over commutative rings satisfy the radical formula.

\subsection{2-primal modules}

\begin{defn}\label{d1}
 A submodule $N$ of an $R$-module $M$ is 2-primal if $$\beta(M/N)=\beta_{co}(M/N).$$ A module $M$ is 2-primal if its zero submodule is 2-primal, i.e.,
 if $$\beta(M)=\beta_{co}(M).$$
\end{defn}

\begin{prop}{\rm \cite[Proposition 2.1]{2p}}
 A ring $R$ is 2-primal if and only if the module $_RR$ is 2-primal.
\end{prop}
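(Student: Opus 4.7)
The plan is to reduce the proposition to Proposition \ref{p} by establishing the two identifications
\[
\beta(R)=\beta({_R}R) \qquad\text{and}\qquad \beta_{co}(R)=\beta_{co}({_R}R),
\]
after which $R$ being 2-primal (i.e.\ $\beta(R)=\beta_{co}(R)$) is immediately equivalent to ${_R}R$ being 2-primal (i.e.\ $\beta({_R}R)=\beta_{co}({_R}R)$). So the proof splits into two mirror arguments, one for prime and one for completely prime.

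For the completely prime radical, I would first show that every completely prime ideal $P$ of $R$ is a completely prime submodule of ${_R}R$: it is a proper left ideal, so $RR\not\subseteq P$, and if $rm\in P$ with $r,m\in R$ then the ring-theoretic condition gives $r\in P$ or $m\in P$, the first case giving $rR\subseteq P$ because $P$ is two-sided. Conversely, I would show any completely prime submodule $P$ of ${_R}R$ is automatically a completely prime two-sided ideal: given $r\in P$, apply the defining condition to the factorisation $r=r\cdot 1$; since $1\notin P$ (as $P$ is proper), we are forced into $rR\subseteq P$, so $P$ is a right ideal as well, and the submodule condition $rm\in P\Rightarrow m\in P$ or $rR\subseteq P$ then reads as the ring-theoretic completely prime condition $rm\in P\Rightarrow m\in P$ or $r\in P$. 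This double inclusion yields $\beta_{co}(R)=\beta_{co}({_R}R)$.

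For the prime radical the same template applies. Every prime ideal $P$ of $R$ is a prime submodule of ${_R}R$: given an ideal $\mathcal{A}$ and a left ideal $N$ with $\mathcal{A}N\subseteq P$, pass to the two-sided ideal $RNR$ generated by $N$; then $\mathcal{A}(RNR)\subseteq R\mathcal{A}NR\subseteq P$, and ring-theoretic primeness forces $\mathcal{A}\subseteq P$ or $RNR\subseteq P$, the latter giving $N\subseteq P$. Conversely, for any prime submodule $P$ of ${_R}R$, the annihilator $(P:R)$ of $R/P$ is a two-sided prime ideal of $R$ sitting inside $P$ (this is the prime-ideal half of the proposition stated just above on $(N\!:\!M)$), so $\beta(R)\subseteq (P:R)\subseteq P$ for every such $P$, and intersecting gives $\beta(R)\subseteq\beta({_R}R)$. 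The other inclusion follows from the preceding sentence. Thus $\beta(R)=\beta({_R}R)$.

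The main obstacle is the prime-radical direction: while the completely prime case is clean because the presence of the identity forces every completely prime submodule of ${_R}R$ to be two-sided, a prime submodule of ${_R}R$ need not itself be a two-sided ideal, so one has to descend to $(P:R)$ and invoke the annihilator proposition. The edge case where $R$ has no (completely) prime ideals is harmless: by the convention $\beta(M)=M$ when $M$ has no prime submodules, both sides of each identification collapse to $R$.
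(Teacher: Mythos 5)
Your proof is correct, and since the paper itself gives no proof (it simply cites \cite[Proposition 2.1]{2p}), there is nothing textual to compare against; the route you take, establishing $\beta(R)=\beta({_R}R)$ and $\beta_{co}(R)=\beta_{co}({_R}R)$ and then invoking Proposition~\ref{p}, is the natural one, and indeed these two identifications are exactly the facts the present paper uses without proof later, in the proof of Theorem~\ref{pl}. One small remark: your asymmetric treatment of the two radicals is the right instinct --- in ${_R}R$ a completely prime submodule is forced to be a two-sided ideal by the $r=r\cdot 1$ trick, whereas a prime submodule of ${_R}R$ need not be two-sided, so the detour through the prime ideal $(P\!:\!R)\subseteq P$ is genuinely necessary rather than an avoidable complication.
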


\begin{exam}
 A Lee-Zhou reduced module (see \cite{LZ})
 and hence a completely prime module is 2-primal. Projective modules over 2-primal rings, 
 IFP modules, symmetric modules and modules over 
  commutative rings are 2-primal, see \cite{2p}.
\end{exam}

\begin{prop}
 If the prime radical of a module $M$ is a completely prime submodule of $M$, then $M$ is a 2-primal module.
\end{prop}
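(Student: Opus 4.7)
The plan is to chase the two inclusions that define the equality $\beta(M)=\beta_{co}(M)$. I would first record the general inclusion $\beta(M)\subseteq \beta_{co}(M)$, which holds unconditionally: the paper already notes that every completely prime submodule of $M$ is prime, so the family of completely prime submodules is a subfamily of the family of prime submodules, and intersecting over a smaller family yields a larger submodule. Thus $\beta_{co}(M)$ contains $\beta(M)$ for every module $M$, independently of the hypothesis.

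For the reverse inclusion $\beta_{co}(M)\subseteq \beta(M)$, I would invoke the hypothesis directly. Since $\beta(M)$ is assumed to be completely prime, it is itself one of the submodules appearing in the intersection that defines $\beta_{co}(M)$. Therefore $\beta_{co}(M)\subseteq \beta(M)$. (In particular, the hypothesis guarantees that $M$ admits at least one completely prime submodule, so the convention $\beta_{co}(M)=M$ that would apply to a module with no completely prime submodules is not triggered here.)

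Combining the two inclusions gives $\beta(M)=\beta_{co}(M)$, which is precisely the definition of $M$ being 2-primal.

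I do not anticipate any real obstacle: the proof is a one-line observation once the two defining intersections are written side by side. The only point that requires a brief mention is the fact that a completely prime submodule is prime, which is already asserted earlier in the excerpt and can be cited rather than reproved.
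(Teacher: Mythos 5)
Your proof is correct and is essentially the same as the paper's: both establish the unconditional inclusion $\beta(M)\subseteq\beta_{co}(M)$ (because every completely prime submodule is prime), and both obtain the reverse inclusion by noting that the hypothesis makes $\beta(M)$ one of the completely prime submodules over which the intersection $\beta_{co}(M)$ is taken. Your parenthetical remark that the hypothesis rules out the degenerate convention $\beta_{co}(M)=M$ is a nice point of care that the paper leaves implicit, but the argument is the same.
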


\begin{prf}
 Since for any completely prime submodule $P$ of $M$, $\beta(M)\subseteq \beta_{co}(M)\subseteq P$ when $\beta(M)$ is a completely prime 
 submodule of $M$, we get $\beta_{co}(M)\subseteq \beta(M)$ such that $\beta_{co}(M) = \beta(M)$.
\end{prf}

 We observe that   ``2-primal modules''  is a better generalisation than  ``modules that satisfy the radical formula''. 
   This is because, all modules over commutative rings are 2-primal just like all commutative rings are 2-primal.
   On the contrary, not all modules over commutative rings satisfy the radical formula. \\

   There was considerable effort   aimed at getting examples of modules that satisfy the radical formula.   
   Now that there is  a   generalisation   better than the notion of modules that satisfy the radical formula,
   i.e., that of 2-primal modules, it  is hoped that there will be interest by different researchers to  search  for more  examples of 2-primal modules, 
   in addition to those pointed out in this paper  and in  \cite{2p}.

\section{The complete radical formula}

 The inequality \begin{equation}\label{eqn0}
                                                  \beta_{co}(M)\subseteq \langle E_M(0)\rangle
                                                 \end{equation}
 which is equivalent to saying that 
 $\langle E_M(0)\rangle=\beta_{co}(M)$, (see Lemma \ref{1}) is a necessary and sufficient condition  
 for a zero submodule of a module  $M$ to satisfy the radical formula if  and only if
   $M$ is 2-primal, see \cite[Corollary 2.21]{RB}.\\

The motivation for studying (sub)modules that satisfy the complete radical formula is three fold. Firstly, it generalizes
 the notion of modules over commutative rings that satisfy the radical formula to modules over   not necessarily 
 commutative rings. Secondly,  it is a necessary and sufficient
 condition for modules to be 2-primal if and only if their zero submodules satisfy the radical formula.
Lastly, it allows every completely semiprime submodule to be an intersection of completely prime submodules which is not true in general.

\begin{defn}\label{def}
 Let $R$ be a ring and $M$   an $R$-module. A submodule $N$ of an $R$-module $M$   satisfies
 a complete radical formula if $$\langle E_M(N)\rangle=\beta_{co}^s(N).$$ A module satisfies the complete radical formula if every submodule
  of $M$ satisfies the complete radical formula. A ring $R$ satisfies the complete  radical formula if every $R$-module satisfies the complete radical
 formula.
\end{defn}

 \begin{lem}\label{1}{\rm \cite[Lemma 2.1]{RB}}   If $N$  is a submodule of an $R$-module $M$, then 
 $$\langle E_M(N)\rangle \subseteq \beta_{co}^s(N).$$
 \end{lem}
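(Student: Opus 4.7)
The plan is to prove the inclusion by fixing an arbitrary completely prime submodule $P$ of $M$ that contains $N$, showing that every generator of $\langle E_M(N)\rangle$ lies in $P$, and then intersecting over all such $P$.

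First I would reduce to showing $E_M(N) \subseteq P$ for every completely prime submodule $P \supseteq N$. Since $P$ is itself a submodule and $\langle E_M(N)\rangle$ is by definition the smallest submodule of $M$ containing $E_M(N)$, once $E_M(N) \subseteq P$ is established we immediately get $\langle E_M(N)\rangle \subseteq P$, and intersecting over all completely prime submodules of $M$ containing $N$ yields $\langle E_M(N)\rangle \subseteq \beta_{co}^s(N)$.

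The heart of the argument is therefore the claim: if $P$ is a completely prime submodule of $M$ and $r^{k}m \in P$ for some $k \in \mathbb{N}$, then $rm \in P$. I would prove this by induction on $k$. The case $k=1$ is immediate. For the inductive step, write $r^{k}m = r(r^{k-1}m) \in P$; complete primality then gives either $rM \subseteq P$, in which case $rm \in P$ directly, or $r^{k-1}m \in P$, in which case the inductive hypothesis yields $rm \in P$. Applying this to any element $rm \in E_M(N)$, where $r^{k}m \in N \subseteq P$ by definition of the envelope, produces $rm \in P$, which is exactly what was needed.

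I do not anticipate a serious obstacle: the only subtlety is making sure the induction is set up so that the alternative $rM \subseteq P$ (rather than $m \in P$) is what gets used, since this is the one that actually delivers $rm \in P$ on the nose. Once the induction is phrased as \emph{$r^k m \in P \Rightarrow rm \in P$} rather than \emph{$r^k m \in P \Rightarrow m \in P$} (the latter being false in general), the rest is a clean one-line appeal to the definition of $\langle E_M(N)\rangle$ as the submodule generated by $E_M(N)$.
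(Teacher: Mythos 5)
Your proof is correct: fixing a completely prime submodule $P\supseteq N$, proving by induction that $r^k m\in P$ forces $rm\in P$, concluding $E_M(N)\subseteq P$ (hence $\langle E_M(N)\rangle\subseteq P$ since $P$ is a submodule), and intersecting over all such $P$ is exactly the argument one expects, and your remark about aiming the induction at $rm\in P$ rather than $m\in P$ correctly flags the one place a careless version would fail. The paper does not reproduce a proof of this lemma but cites it from \cite{RB}, so there is no in-paper argument to compare against; yours is the natural one and handles the vacuous case (no completely prime submodule contains $N$, so $\beta_{co}^s(N)=M$) automatically.
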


 \begin{cor}\label{cor1}
  If $I$  is a left ideal of a ring $R$, then $$\langle E_R(I)\rangle \subseteq \beta_{co}^s(I).$$
 \end{cor}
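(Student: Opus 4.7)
The plan is to derive this corollary as an immediate specialization of Lemma \ref{1}, which is already stated in the excerpt. The point is that a left ideal of a ring is precisely a submodule of the regular left module.

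First I would observe that $R$ may be viewed as a left $R$-module $_RR$ in the standard way, and under this identification the left ideals of $R$ are exactly the $R$-submodules of $_RR$. In particular, the given left ideal $I$ becomes a submodule of $M = {}_RR$, so Lemma \ref{1} applies with $M = {}_RR$ and $N = I$.

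Next I would unwind the notation on both sides. On the left, the envelope $E_M(N)$ becomes $E_R(I) = \{rs : r \in R,\ s \in R,\ r^k s \in I \text{ for some } k \in \mathbb{N}\}$, so $\langle E_M(N)\rangle$ is exactly $\langle E_R(I)\rangle$ as written in the corollary. On the right, $\beta_{co}^s(N)$ is the intersection of all completely prime submodules of $_RR$ containing $I$; since completely prime submodules of $_RR$ are completely prime left ideals of $R$ that strictly contain no trivial requirement beyond $RM \not\subseteq P$, this intersection coincides with the object denoted $\beta_{co}^s(I)$ in the corollary. Applying Lemma \ref{1} therefore yields $\langle E_R(I)\rangle \subseteq \beta_{co}^s(I)$, which is the desired inclusion.

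There is essentially no obstacle here: the corollary is purely a notational transcription of the lemma to the special case $M = {}_RR$. The only thing that requires a sentence of care is confirming that the envelope and the completely prime radical, when computed for $_RR$, agree with the ring-theoretic symbols $E_R(I)$ and $\beta_{co}^s(I)$ used in the corollary's statement; both agreements are immediate from the definitions.
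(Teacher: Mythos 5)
Your proof is correct and matches the paper's intent exactly: the corollary is left unproved in the paper precisely because it is the immediate specialization of Lemma \ref{1} to $M = {}_RR$, $N = I$, which is what you do.
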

 
 We now give a new characterization of 2-primal rings.
 
 \begin{thm}\label{prim}
  A ring $R$ is 2-primal if and only if $$\beta(R)=E_R(0).$$
 \end{thm}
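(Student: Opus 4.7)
The plan is to establish the two containments via the description $E_R(0)=\{rs:r,s\in R,\ r^{k}s=0 \text{ for some } k\in\mathbb{N}\}$, together with the permanent fact that the prime radical consists of nilpotent elements, so $\beta(R)\subseteq\mathcal{N}(R)$.

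For the forward direction, I would assume $R$ is 2-primal, i.e.\ $\beta(R)=\mathcal{N}(R)$, which by Proposition \ref{p} also equals $\beta_{co}(R)$. Any nilpotent $r$ with $r^n=0$ lies in $E_R(0)$ via the trivial factorisation $r=r\cdot 1$ together with $r^n\cdot 1=0$; this gives $\beta(R)=\mathcal{N}(R)\subseteq E_R(0)$. For the reverse inclusion I would quote Corollary \ref{cor1} applied to the zero ideal to obtain $\langle E_R(0)\rangle\subseteq\beta_{co}^s(0)=\beta_{co}(R)$, and then invoke $\beta_{co}(R)=\beta(R)$ from the 2-primal hypothesis. (This can also be verified by hand: given $rs\in E_R(0)$ with $r^{k}s=0$ and any completely prime ideal $P$, either $s\in P$, or $r^{k}\in P$, in which case iterated complete primeness on $r\cdot r^{k-1}$ forces $r\in P$; either way $rs\in P$, so $rs\in\beta_{co}(R)$.)

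The converse requires no new ideas. The containments $\beta(R)\subseteq\mathcal{N}(R)\subseteq E_R(0)$ hold unconditionally, the second by the same trick $r=r\cdot 1$ used above. Assuming $\beta(R)=E_R(0)$ collapses this chain to $\mathcal{N}(R)=\beta(R)$, which is the definition of 2-primal.

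I do not foresee a genuine obstacle; both halves reduce to the two elementary observations $\mathcal{N}(R)\subseteq E_R(0)$ and $\langle E_R(0)\rangle\subseteq\beta_{co}(R)$. The only slightly delicate point is the iterated use of complete primeness to pass from $r^{k}\in P$ to $r\in P$, and this is side-stepped entirely by invoking the previously-established Corollary \ref{cor1}.
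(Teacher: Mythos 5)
Your proof is correct and follows essentially the same route as the paper: both arguments rest on the unconditional chain $\beta(R)\subseteq\mathcal{N}(R)\subseteq E_R(0)\subseteq\beta_{co}(R)$, collapse it via $\beta(R)=\beta_{co}(R)$ for the forward direction, and use the left portion of the chain to recover $\beta(R)=\mathcal{N}(R)$ for the converse. You merely spell out the elementary inclusions (the $r=r\cdot 1$ trick and Corollary \ref{cor1}) that the paper states as ``easy to see.''
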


\begin{prf}
For any ring $R$, it is easy to see that $$\beta(R)\subseteq \mathcal{N}(R)\subseteq E_R(0)\subseteq \beta_{co}(R).$$
If $R$ is 2-primal, $\beta(R)=\beta_{co}(R)$ and hence $\beta(R)=E_R(0).$ For the converse, $\beta(R)=E_R(0)$  
implies that $\beta(R)=\mathcal{N}(R)$, which shows that $R$ is 2-primal.
 
\end{prf}

 \begin{cor}\label{2m}
  If $R$ is a 2-primal ring, then     $$\beta(R)=\mathcal{N}(R)= E_R(0)= \beta_{co}(R).$$    
    \end{cor}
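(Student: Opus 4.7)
The plan is to exploit the universal chain of inclusions
$$\beta(R)\subseteq \mathcal{N}(R)\subseteq E_R(0)\subseteq \beta_{co}(R)$$
that was already recorded inside the proof of Theorem \ref{prim}. The first inclusion holds because every nilpotent element lies in every prime ideal; the second is immediate from the definition of $E_R(0)$ (taking $m=1$ witnesses any nilpotent element as an envelope element); and the third uses the fact that $E_R(0)$ consists of elements $r$ with $r^k=0$ for some $k$, and every such $r$ lies in every completely prime ideal.

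Given this chain, I would invoke Proposition \ref{p}, which asserts that $R$ is 2-primal if and only if $\beta(R)=\beta_{co}(R)$. Under the hypothesis of the corollary, the two endpoints of the chain collapse, which forces the three intermediate containments to become equalities simultaneously. Hence $\beta(R)=\mathcal{N}(R)=E_R(0)=\beta_{co}(R)$, as claimed.

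Since Theorem \ref{prim} and Proposition \ref{p} have already done the substantive work, there is no genuine obstacle here; the corollary is essentially a bookkeeping statement. The only small point worth spelling out, to keep the writeup self-contained, is the justification of the middle inclusion $\mathcal{N}(R)\subseteq E_R(0)$ and the right-hand inclusion $E_R(0)\subseteq \beta_{co}(R)$, both of which are one-line verifications from the respective definitions and do not require the 2-primal hypothesis.
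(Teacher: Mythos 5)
Your proof takes essentially the same route as the paper's: both deduce the corollary from the universal chain $\beta(R)\subseteq \mathcal{N}(R)\subseteq E_R(0)\subseteq \beta_{co}(R)$ together with the 2-primal hypothesis $\beta(R)=\beta_{co}(R)$ forcing the endpoints to coincide. One small slip in your aside: $E_R(0)$ is not in general the set of nilpotent elements of $R$ — a typical element has the form $rm$ with $r^k m=0$, and such an $rm$ need not itself be nilpotent (indeed, if $E_R(0)=\mathcal{N}(R)$ always held, the middle equality of the corollary would be vacuous); the correct one-line check for $E_R(0)\subseteq\beta_{co}(R)$ is that if $P$ is completely prime and $r^k m\in P$ then $r^k\in P$ or $m\in P$, hence $rm\in P$ either way. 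This does not affect the validity of your argument, since the inclusion you need is true regardless.
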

 
 \begin{prf}
     Follows from the fact that  for any ring $R$, $\beta(R)\subseteq \mathcal{N}(R)\subseteq E_R(0)\subseteq \beta_{co}(R)$ and for 2-primal
    rings, $\beta(R)=\beta_{co}(R)$.    
 \end{prf}

%\begin{defn}
% An $R$-module $M$  is cancellable if whenever $AM=BM$ for some ideals $A$ and $B$  of $R$, it follows that  $A=B$.
%\end{defn}

%We proved in \cite[Theorem 2.1]{2p} that a projective module over a 2-primal ring is 2-primal. In Proposition \ref{pw} below, we give a converse.

%\begin{prop}\label{pw}
%Let $M$ be a projective and cancellable  module (a faithful module is cancellable). If in addition, $M$ is 2-primal, then so is $R$. 
%\end{prop}

%\begin{prf}
 %If $M$ is 2-primal, then $\beta(M)=\beta_{co}(M)$. It is also true that for a projective $R$-module $M$, $\beta(M)=\beta(R)M$ and
 %$\beta_{co}(M)=\beta_{co}(R)M$, see \cite[Proposition 1.1.3]{B}. It follows that $\beta(R)M=\beta_{co}(R)M$. So, $\beta(R)=\beta_{co}(R)$ by definition 
%  of cancellable modules.  
%\end{prf}

 \begin{prop}\label{pr}
  If $N$ is a submodule of an $R$-module $M$, then  the following statements are equivalent:
  \begin{enumerate}
   \item  $E_M(N)=N$,
     \item $N$  is a completely semiprime submodule of $M$.
  \end{enumerate}
 \end{prop}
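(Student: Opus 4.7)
The plan is to prove the two implications separately, with the content concentrated in one direction. For $(1)\Rightarrow(2)$, suppose $E_M(N)=N$ and assume $a^{2}m\in N$ for some $a\in R$, $m\in M$. By the very definition of the envelope (take $r=a$, $k=2$), the element $am$ lies in $E_M(N)$, hence in $N$. So $am\in N$, which means $N$ is completely semiprime. This direction is essentially just unwinding the definitions.

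For $(2)\Rightarrow(1)$, I would show both containments. The inclusion $N\subseteq E_M(N)$ always holds trivially: any $n\in N$ can be written as $n=1\cdot n$ with $1^{1}\cdot n=n\in N$. The substance is to show $E_M(N)\subseteq N$. Given $x=rm\in E_M(N)$ with $r^{k}m\in N$ for some $k\in\mathbb{N}$, I want to conclude $rm\in N$ using only the hypothesis that $a^{2}y\in N$ implies $ay\in N$ for all $a\in R$, $y\in M$.

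The key technical step is a doubling argument. Choose $n\in\mathbb{N}$ with $2^{n}\geq k$. Since $N$ is a submodule, $r^{2^{n}}m=r^{2^{n}-k}(r^{k}m)\in N$. Now write $r^{2^{n}}m=(r^{2^{n-1}})^{2}m$ and apply the completely semiprime property with $a=r^{2^{n-1}}$ to obtain $r^{2^{n-1}}m\in N$. Iterating this step $n$ times drops the exponent through $r^{2^{n-2}}m,\ldots,r^{2}m$ and finally yields $rm\in N$, as required.

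I do not expect a substantive obstacle: the forward direction is immediate, and the reverse direction is a standard finite iteration of the completely semiprime condition. The only point worth checking is the corner case where $k$ is allowed to be $0$ or $1$; both are trivial (if $m\in N$ or $rm\in N$ already, there is nothing to prove), so the doubling argument handles the remaining cases $k\geq 2$ uniformly.
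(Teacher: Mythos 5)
Your proof is correct, and since the paper's own proof of Proposition~\ref{pr} is simply the word ``Elementary,'' your unwinding of the definitions together with the exponent-doubling iteration is exactly the standard argument being alluded to. Both containments in $(2)\Rightarrow(1)$ are handled properly (the trivial $N\subseteq E_M(N)$ via the unit of $R$, and $E_M(N)\subseteq N$ by repeatedly applying the completely semiprime condition to $(r^{2^{j}})^{2}m$), so there is nothing missing.
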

 
 \begin{prf}
  Elementary.
 \end{prf}

\begin{cor}\label{gd}
 A submodule $N$ of an $R$-module $M$  is  Lee-Zhou reduced if and only if it is both IFP and satisfies $E_M(N)=N$.
\end{cor}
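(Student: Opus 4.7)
The plan is to unpack the Lee–Zhou reduced condition into two separable pieces and then identify one of them with $E_M(N)=N$ via Proposition \ref{pr}. Interpreting \emph{$N$ is Lee–Zhou reduced} through the quotient module $M/N$, the condition reads: for every $a\in R$ and $m\in M$, $a^{2}m\in N$ implies $aRm\subseteq N$. The strategy is to show this is equivalent to the conjunction of the IFP condition (i.e., $am\in N$ implies $aRm\subseteq N$) with the completely semiprime condition ($a^{2}m\in N$ implies $am\in N$), and then invoke Proposition \ref{pr} to rewrite the latter as $E_M(N)=N$.

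For the forward direction, I would first observe that completely semiprimeness drops out of the Lee–Zhou condition simply by specialising to $r=1$ in the conclusion $aRm\subseteq N$. For IFP, I would start with $am\in N$, use that $N$ is a submodule to conclude $a^{2}m=a(am)\in N$, and then apply the Lee–Zhou reduced hypothesis to obtain $aRm\subseteq N$. Thus Lee–Zhou reduced yields both IFP and $E_M(N)=N$.

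The backward direction should be an immediate chain: starting from $a^{2}m\in N$, the completely semiprime property furnished by $E_M(N)=N$ (Proposition \ref{pr}) delivers $am\in N$, and the IFP hypothesis then promotes this to $aRm\subseteq N$, which is precisely the Lee–Zhou condition. The only genuine point to be careful about is spelling out the definition of Lee–Zhou reduced for a general submodule $N$ rather than just the zero submodule; once this is phrased through $M/N$, the whole argument reduces to a formal manipulation at the level of single elements, in keeping with the \emph{Elementary} marker on Proposition \ref{pr}.
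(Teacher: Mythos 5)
Your proof is correct and follows the route the paper intends: the Corollary is stated immediately after Proposition~\ref{pr} with no independent proof, and your decomposition of the Lee--Zhou condition into IFP plus completely semiprime, with Proposition~\ref{pr} translating the latter into $E_M(N)=N$, is precisely the intended argument. Both the forward specialisation (taking $r=1$ to get $am\in N$ from $a^2m\in N$) and the backward chain ($a^2m\in N \Rightarrow am\in N \Rightarrow aRm\subseteq N$) are sound, and you correctly identify the only subtlety, namely reading the Lee--Zhou and IFP conditions modulo $N$ rather than at the zero submodule.
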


Corollary \ref{gd} allows us to paraphrase Question 2.1 posed in paper \cite{RB} as:

\begin{qn}
 Is there a prime (resp. semiprime) module which is not completely prime (resp. Lee-Zhou reduced) but it is completely semiprime?
\end{qn}

A positive answer  to this question would lead to an example of a module which satisfies the radical formula but not 2-primal.\\

 An element $m$ of an $R$-module $M$ is   {\it strongly nilpotent}  \cite{Behboodi} if $m=\sum_{i=1}^ra_im_i$ for some $a_i\in R$, $m_i\in M$ 
 and $r\in \N$, such that for every $i$ $(1\leq i \leq r)$ and every sequence $a_{i1}, a_{i2}, a_{i3}, \cdots$ where
 $a_{i1}=a_i$ and $a_{in+1}\in a_{in}Ra_{in}$ (for all $n$), we have $a_{ik}Rm_i=0$ for some $k\in \N$. The set of all strongly nilpotent elements
 of a module $M$ is a submodule  and is denoted by $\mathcal{N}_s(M)$. 
 
 \begin{lem}\label{ll}
  For any $R$-module $M$, the following inequalities hold:
  
  $$ \mathcal{N}_s(M)\subseteq \langle E_M(0)\rangle \subseteq \beta_{co}(M).$$
 \end{lem}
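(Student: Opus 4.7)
The plan is to verify the two inclusions separately. The second inclusion $\langle E_M(0)\rangle \subseteq \beta_{co}(M)$ is essentially free: apply Lemma \ref{1} with $N=0$, noting that by the convention in Definition \ref{def} the notation $\beta_{co}^s(0)$ coincides with $\beta_{co}(M)$. So the real content of the lemma lies in the first inclusion.

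For $\mathcal{N}_s(M)\subseteq \langle E_M(0)\rangle$, I would take a strongly nilpotent element $m\in M$ and, by definition, write $m = \sum_{i=1}^{r} a_i m_i$ with $a_i\in R$, $m_i\in M$, such that every admissible sequence $(a_{in})_{n\geq 1}$ with $a_{i1}=a_i$ and $a_{i,n+1}\in a_{in}Ra_{in}$ eventually annihilates $m_i$ in the sense $a_{ik}Rm_i=0$. The key move is to specialize the quantifier ``for every sequence'' by choosing a single concrete sequence that produces ordinary nilpotence, namely picking the ring element $1$ at each step so that $a_{i,n+1}=a_{in}\cdot 1\cdot a_{in}=a_{in}^2$. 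By induction this gives $a_{in}=a_i^{2^{n-1}}$, and the strong-nilpotence hypothesis applied to this sequence yields some $k=k_i$ with $a_i^{2^{k-1}}Rm_i=0$; in particular $a_i^{2^{k-1}}m_i = 0$.

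This is exactly the condition needed to conclude that $a_im_i\in E_M(0)$ (take the scalar $a_i$ and the module element $m_i$ with exponent $2^{k_i-1}$). Summing over $i$ from $1$ to $r$ places $m=\sum a_im_i$ inside the submodule $\langle E_M(0)\rangle$ generated by the envelope, finishing the argument.

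The only mildly subtle point — and what I would flag as the main obstacle — is resisting the temptation to look for a fancier construction: one must notice that the universal quantifier in the definition of strongly nilpotent is working in our favor, so specializing to the simplest sequence $a_i, a_i^2, a_i^4,\ldots$ suffices. Everything else is bookkeeping: the factor $R$ in $a_{ik}Rm_i=0$ automatically contains $1$, so the formal nilpotence exponent we need for the envelope definition is handed to us for free.
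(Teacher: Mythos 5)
Your proof is correct and follows essentially the same route as the paper's: for the first inclusion you specialize the universal quantifier in the definition of a strongly nilpotent element to the sequence $a_i, a_i^2, a_i^4, \ldots$ (obtained by inserting $1$ in the middle at each step), extract $a_i^{2^{k-1}}m_i = 0$ from $a_{ik}Rm_i = 0$, and conclude $a_im_i \in E_M(0)$ for each $i$; and for the second inclusion you invoke Lemma~\ref{1} with $N=0$, exactly as the paper does.
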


 \begin{prf}
  Let $m\in \mathcal{N}_s(M)$, then $m=\sum_{i=1}^ra_im_i$ for some $a_i\in R$, $m_i\in M$ and $r\in \N$ such that for every $i$ ($1\leq i \leq r$)
  and every sequence  
  $a_{1i}, a_{2i}, a_{3i}, \cdots $ where $a_{i1}=a_i$ and $a_{n+1 i}\in a_{ni}Ra_{ni}$ for all $n$, we have $a_{ki}Rm_i=0$ for some $k\in \N$.
  For some $i$, choose the sequence 
  
  $$a_i, a_i^2, a_i^4, a_i^8, \cdots=\{a_i^{2^{r-1}}\}_{r=1}^{\infty}$$ then $a_{1i}=a_i$ and $a_{n+1 i}\in a_{ni}Ra_{ni}$ for all $n$. By hypothesis,
  there exists $k\in \N$ such that $a_{ki}Rm_i=0$. Since $a_{ki}=a_i^{2^{k-1}}$, it follows that
  $a_i^{2^{k-1}}m_i=0$. This implies $a_im_i\in E_M(0)$ so that   $m=\sum_{i=1}^ra_im_i\in \langle E_M(0)\rangle.$ The second inequality
  follows from Lemma \ref{1}.
 \end{prf}

If $R$ is a 2-primal ring, then we know that 

\begin{equation}\label{eqn2}
 \mathcal{N}(R)=\mathcal{N}_s(R)=E_R(0)=\beta_{co}(R)=\beta(R).
\end{equation}

For modules,  we have Theorem \ref{TM} below.

\begin{thm}\label{TM}
If $M$ is  a projective and 2-primal $R$-module, then
\begin{equation}\label{eq}
 \mathcal{N}_s(M)=\langle E_M(0)\rangle =\beta_{co}(M)=\beta(M).
\end{equation}  
Hence,  the zero submodule of $M$ satisfies both the complete radical formula  as well as the radical formula; and 
  $M$ satisfies the module analogue of Levitzki result for rings.
\end{thm}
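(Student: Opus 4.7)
The plan is to establish the chain (\ref{eq}) and then read off the three named consequences. Lemma \ref{ll} already gives, for any $R$-module, the inclusions
$$\mathcal{N}_s(M)\subseteq \langle E_M(0)\rangle \subseteq \beta_{co}(M),$$
and the 2-primal hypothesis supplies $\beta(M)=\beta_{co}(M)$ by Definition \ref{d1} applied at $N=0$. It therefore suffices to produce the reverse inclusion $\beta(M)\subseteq \mathcal{N}_s(M)$, for then the loop $\beta(M)\subseteq \mathcal{N}_s(M)\subseteq \langle E_M(0)\rangle\subseteq \beta_{co}(M)=\beta(M)$ collapses to equality throughout.

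This missing containment is exactly the module analogue of Levitzki's theorem, and it is where projectivity is used. My plan is to invoke it from \cite{Behboodi}, where strongly nilpotent elements of a module are introduced and the equality $\mathcal{N}_s(M)=\beta(M)$ is established precisely for projective $M$. The underlying idea parallels Levitzki's classical argument for rings: iterated descending choices $a_{i,n+1}\in a_{in}Ra_{in}$ refine into a transfinite process whose terminal behaviour detects prime submodules, and projectivity is what transports that ring-level argument faithfully to the submodule lattice of $M$ (a semiprime projective module has no nonzero ``nilpotent'' cyclic submodule, since this would pull back to a nonzero nilpotent ideal of $R$ via the trace).

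Once (\ref{eq}) is in hand, the remaining assertions are purely formal. Since $\beta^s(0)=\beta(M)$ and $\beta_{co}^s(0)=\beta_{co}(M)$, the equalities $\langle E_M(0)\rangle=\beta(M)$ and $\langle E_M(0)\rangle=\beta_{co}(M)$ are, by Definition \ref{def} and the definition of the radical formula, precisely the statements that the zero submodule of $M$ satisfies the radical formula and the complete radical formula respectively; while $\mathcal{N}_s(M)=\beta(M)$ is by definition the module analogue of Levitzki's result. The only genuine obstacle is the projective-plus-Levitzki step $\beta(M)\subseteq \mathcal{N}_s(M)$; should the exact form I need not be literally available in \cite{Behboodi}, the fallback is to argue by contraposition, using projectivity to lift any $m\notin \mathcal{N}_s(M)$ through a generating set to a ring-level element outside $\beta(R)$, and then extending a prime ideal avoiding it to a prime submodule of $M$ avoiding $m$.
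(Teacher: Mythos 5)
Your proposal is correct and follows essentially the same route as the paper: both rest on Lemma \ref{ll} for $\mathcal{N}_s(M)\subseteq \langle E_M(0)\rangle \subseteq \beta_{co}(M)$, the 2-primal hypothesis for $\beta(M)=\beta_{co}(M)$, and Behboodi's theorem that $\mathcal{N}_s(M)=\beta(M)$ for projective modules (the paper cites it as \cite[Theorem 3.8]{Behboodi}), with the closing chain of inclusions forcing equality throughout. The ``fallback'' sketch in your final paragraph is unnecessary, since the Behboodi reference delivers exactly the needed equality.
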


\begin{prf}
If $M$ is  a projective and 2-primal module, then $\mathcal{N}_s(M) =\beta_{co}(M)=\beta(M)$, by \cite[Theorem 3.8]{Behboodi}
and the definition of 2-primal modules. Apply Lemma \ref{ll} to complete
the proof.
\end{prf}

\begin{cor}\label{11}
 For a projective module $M$ over any one of the following rings: 
    reduced rings,       commutative  rings,  left-duo rings,  symmetric rings, reversible rings, 
    IFP rings,  PSI rings,  semi-symmetric rings and 2-primal rings;    
the equality $$\mathcal{N}_s(M)=\langle E_M(0)\rangle =\beta_{co}(M)=\beta(M)$$ holds.
Hence,  the zero submodule of $M$ satisfies both the complete radical formula  as well as the radical formula; and 
  $M$ satisfies the module analogue of Levitzki result for rings.
\end{cor}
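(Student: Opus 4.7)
The plan is to derive this statement as a direct specialization of Theorem \ref{TM}, by observing that every ring class named in the hypothesis is a subclass of the class of 2-primal rings, so that a projective module over any such ring is automatically both projective and 2-primal.

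First, I would invoke the classical hierarchy of generalizations of commutativity summarized in \cite{Gary} and \cite{Marks}. In particular, reduced rings and commutative rings are 2-primal, and there is a well-documented chain of implications showing that symmetric $\Rightarrow$ reversible $\Rightarrow$ IFP (equivalently SI), and that left-duo, PSI, and semi-symmetric rings all belong to the 2-primal class. Together with the tautological fact that a 2-primal ring is 2-primal, this accounts for every ring in the list and reduces the statement to: if $R$ is 2-primal and $M$ is a projective $R$-module, then $\mathcal{N}_s(M)=\langle E_M(0)\rangle=\beta_{co}(M)=\beta(M)$.

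Second, I would use the fact, recorded in the Example immediately after Definition \ref{d1}, that any projective module over a 2-primal ring is itself 2-primal. Consequently the projective $R$-module $M$ is both projective and 2-primal, satisfying the two hypotheses of Theorem \ref{TM}. Applying that theorem directly yields the chain of equalities $\mathcal{N}_s(M)=\langle E_M(0)\rangle=\beta_{co}(M)=\beta(M)$, and the remaining consequences, namely that the zero submodule of $M$ satisfies both the complete radical formula (by Definition \ref{def}, since $\langle E_M(0)\rangle=\beta_{co}(M)=\beta_{co}^s(0)$) and the radical formula (since $\langle E_M(0)\rangle=\beta(M)=\beta^s(0)$), and that $M$ fulfills the module analogue of Levitzki's result (since $\mathcal{N}_s(M)=\beta(M)$), are exactly the conclusions already packaged inside Theorem \ref{TM}.

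There is essentially no analytic obstacle here; the proof is a one-line reduction once Theorem \ref{TM} is in place. The only point requiring care is bookkeeping: confirming that each of the eight named classes (reduced, commutative, left-duo, symmetric, reversible, IFP, PSI, semi-symmetric) is indeed contained in the class of 2-primal rings, which is handled by citing the implication chart in \cite{Gary, Marks} rather than by any new argument.
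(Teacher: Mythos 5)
Your argument matches the paper's proof almost word for word: reduce all eight ring classes to the 2-primal case via the implication chart in \cite{Gary,Marks}, note that a projective module over a 2-primal ring is 2-primal (the paper cites \cite[Corollary 2.1]{2p} directly, which is also the source underlying the Example you reference), and then apply Theorem \ref{TM}. This is exactly the route the paper takes, and the proposal is correct.
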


\begin{prf}
  Any of the above mentioned rings is 2-primal, see a chart of implications in \cite{Marks}. By \cite[Corollary 2.1]{2p}, a projective module over 
   a 2-primal ring is 2-primal. The rest follows from  Theorem \ref{TM}.
\end{prf}
 
 \begin{defn} \label{zee}
 An $R$-module $M$ is 
 \begin{enumerate}
  \item Lee-Zhou reduced \cite{LZ} if for all $a\in R$ and every $m\in M$, $am=0$ implies that $Rm \cap aM=0$. This is
  equivalent  to saying that:  for all $a\in R$ and every $m\in M$, $a^2m=0$ implies that $aRm=0$;  
 \item symmetric if  for $a, b\in R$  and $m\in M$, $abm=0$ implies that $bam=0$;
 \item semi-symmetric if for all $a\in R$ and  every $m\in M$,  $a^2m=0$ implies that $(a)^2m=0$ where
 $(a)$ is the ideal of $R$ generated by $a\in R$;
 \item  IFP (i.e., it has the insertion-of-factor-property) if whenever $am=0$ for $a\in R$ and $m\in M$,  we have $aRm=0$.
 \end{enumerate}
   \end{defn}
   
%A submodule $N$ of an $R$-module $M$ is Lee-Zhou completely semiprime (resp. symmetric, IFP, semi-symmetric) if in the definition of 
%reduced (resp. symmetric, IFP, semi-symmetric) we have $N$ in the place of ``$0$'' and ``$\in$'' or ``$\subseteq$'' (whatever is appropriate)
%in the place of ``$=$''.  

\begin{cor}\label{cv} For each of the following modules:
 \begin{enumerate}
\item $M$ is 2-primal and free,
\item $M$ is semi-symmetric and free,
 \item $M$ is  semi-symmetric and projective,
  \item $M$ is  IFP and projective,
 \item $M$ is  IFP and free,
 \item $M$ is symmetric and projective,
 \item $M$ is symmetric and free,
 \item $M$  is reduced and projective, 
 \item $M$ is reduced and free,
  \item $R$ is commutative and $M$ is projective,
 \item $R$ is commutative and $M$ is free;
 \end{enumerate}
   the equality $$\mathcal{N}_s(M)=\langle E_M(0)\rangle =\beta_{co}(M)=\beta(M)$$ holds.
Hence,  the zero submodule of $M$ satisfies both the complete radical formula  as well as the radical formula; and 
  $M$ satisfies the module analogue of Levitzki result for rings.
\end{cor}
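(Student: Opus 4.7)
The strategy is to observe that Corollary \ref{cv} will follow immediately from Theorem \ref{TM} once we verify, in each of the eleven cases, that the module $M$ is both projective and 2-primal. Projectivity is automatic whenever $M$ is free (cases (2), (5), (7), (9), (11)) and is part of the hypothesis otherwise; so the real work is to confirm 2-primality.

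For most cases this is a bookkeeping exercise relying on results already recorded earlier in the paper. Case (1) assumes 2-primality outright. Cases (4)--(5), (6)--(7), and (10)--(11) are covered by the example in Section 3.3, which asserts that IFP modules, symmetric modules, and modules over commutative rings are 2-primal. Cases (8)--(9) follow because every Lee--Zhou reduced module is completely prime and therefore 2-primal (by Example 3.3 together with the preceding proposition on modules whose prime radical is a completely prime submodule).

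The only cases demanding a genuine argument are (2) and (3), where $M$ is assumed semi-symmetric. Here one must verify the implication: semi-symmetric $\Rightarrow$ 2-primal. I would imitate the ring-theoretic derivation indicated by the chart of implications in \cite{Marks}: starting from $a^2m=0\Rightarrow (a)^2m=0$, one shows that every prime submodule $P$ of $M$ is already completely prime (if $rm\in P$ then pushing $r$ against itself via semi-symmetry eventually forces either $m\in P$ or $rM\subseteq P$), and this collapses the intersection defining $\beta_{co}(M)$ onto the one defining $\beta(M)$. Combined with projectivity (free implies projective in case (2)), Theorem \ref{TM} then applies.

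The anticipated obstacle is exactly cases (2)--(3): although the ring-theoretic implication semi-symmetric $\Rightarrow$ 2-primal is standard, the module analogue must be checked directly, paying attention to how the two-sided ideal $(a)$ generated by $a\in R$ interacts with the envelope $E_M(0)$. All other cases are routine once Theorem \ref{TM} is invoked, and the three consequences (complete radical formula at the zero submodule, radical formula at the zero submodule, and the module analogue of Levitzki's result) are read off verbatim from the conclusion of that theorem.
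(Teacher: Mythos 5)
Your overall decomposition matches the paper's: verify that each module on the list is both projective (free $\Rightarrow$ projective handles the ``free'' cases) and 2-primal, then read off the conclusion from Theorem \ref{TM}. The paper, however, establishes 2-primality in one stroke by citing \cite[Theorems 2.2 and 2.3]{2p}, which cover \emph{all} the module classes on the list, including the semi-symmetric one. You instead rely on the example in Section 3.3, which does not mention semi-symmetric modules, and you consequently flag cases (2)--(3) as needing a fresh argument.

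That is exactly where your proposal has a genuine gap. You sketch showing that under the semi-symmetry hypothesis ``every prime submodule $P$ of $M$ is already completely prime,'' but this is strictly stronger than what 2-primality requires ($\beta(M)=\beta_{co}(M)$ only demands that the two \emph{intersections} coincide, not that the two classes of submodules coincide), and it is doubtful that semi-symmetry alone forces every prime submodule to be completely prime. Your sketch (``pushing $r$ against itself via semi-symmetry eventually forces\dots'') is not an argument, and without it cases (2)--(3) remain unproved. The correct and short route is the one the paper takes: invoke \cite[Theorems 2.2 and 2.3]{2p}, where the chain of module-level implications (reduced $\Rightarrow$ symmetric $\Rightarrow$ IFP, and semi-symmetric, all landing in 2-primal) is established.

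One smaller slip: for cases (8)--(9) you assert ``every Lee--Zhou reduced module is completely prime.'' That implication is false in general; the paper's example lists \emph{simple} Lee--Zhou reduced modules among completely prime ones, and the sentence ``a Lee--Zhou reduced module \dots\ and hence a completely prime module is 2-primal'' is pointing out that completely prime modules are a special case of Lee--Zhou reduced ones, not the reverse. Fortunately you don't need the detour: the example already states directly that Lee--Zhou reduced modules are 2-primal, which is all cases (8)--(9) require.
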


 \begin{prf}
  By \cite[Theorems 2.2 and 2.3]{2p}, and the fact that every free module is projective, each of these modules is 2-primal and projective. The rest 
  follows from Theorem \ref{TM}.
 \end{prf}

Lemma \ref{th1} below can be proved  with appropriate modification of methods used to prove \cite[Theorem 1.5]{MM} for modules over commutative rings.

\begin{lem} \label{th1}
Let $\phi ~:~ M\rightarrow M'$ be an $R$-module epimorphism  and let  $N$ be  a submodule of $M$ such that $N\supseteq \text{Ker}~\phi$.
\begin{enumerate}
 \item[(i)] If $\beta_{co}^s(N)=\langle E_M(N)\rangle$, then $\beta_{co}^s(\phi (N))=\langle E_{M'}(\phi (N))\rangle$;
 \item[(ii)] If $N'$ is a submodule of $M'$ and $\beta_{co}^s(N')=\langle E_{M'}(N')\rangle$, then\\
 $\beta_{co}^s(\phi ^{-1}(N'))=\langle E_{M}(\phi ^{-1}(N'))\rangle$.
 \item[(iii)] If $\beta^s(N)=\langle E_M(N)\rangle$, then $\beta^s(\phi (N))=\langle E_{M'}(\phi (N))\rangle$;
 \item[(iv)] If $N'$ is a submodule of $M'$ and $\beta^s(N')=\langle E_{M'}(N')\rangle$, then\\
 $\beta^s(\phi ^{-1}(N'))=\langle E_{M}(\phi ^{-1}(N'))\rangle$.
\end{enumerate}
\end{lem}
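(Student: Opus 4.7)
The plan is to reduce the four parts of the lemma to two compatibility statements between $\phi$ and the operations appearing on each side of the (complete) radical formula, and then to use that $\phi$ induces a bijection between submodules of $M$ containing $\text{Ker}\,\phi$ and submodules of $M'$. The main technical hurdle will be the second compatibility, concerning envelopes, because $E_M(N)$ is not a submodule and elements must be tracked by hand; the hypothesis $N \supseteq \text{Ker}\,\phi$ plays its decisive role exactly there.

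First I would record that whenever $\{A_i\}_{i \in I}$ is a family of submodules of $M$ with $\text{Ker}\,\phi \subseteq A_i$ for every $i$, the equality $\phi(\bigcap_i A_i) = \bigcap_i \phi(A_i)$ holds: the nontrivial inclusion follows by picking some $a_{i_0} \in A_{i_0}$ with $\phi(a_{i_0}) = y$ and noting $a_{i_0} - a_i \in \text{Ker}\,\phi \subseteq A_i$ for each $i$, so $a_{i_0} \in A_i$. A standard correspondence-theorem argument then shows that $P \mapsto \phi(P)$ and $P' \mapsto \phi^{-1}(P')$ exchange completely prime submodules of $M$ containing $\text{Ker}\,\phi$ with completely prime submodules of $M'$, and likewise for prime submodules. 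Applied to the families defining $\beta_{co}^s(N)$ and $\beta^s(N)$, whose members all contain $N \supseteq \text{Ker}\,\phi$, this yields
\[
\phi(\beta_{co}^s(N)) = \beta_{co}^s(\phi(N)), \qquad \phi(\beta^s(N)) = \beta^s(\phi(N)).
\]

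Next I would establish $\phi(E_M(N)) = E_{M'}(\phi(N))$. The forward inclusion is immediate: if $rm \in E_M(N)$ with $r^k m \in N$, then $r^k\phi(m) = \phi(r^k m) \in \phi(N)$, so $\phi(rm) \in E_{M'}(\phi(N))$. For the reverse inclusion, given $rm' \in E_{M'}(\phi(N))$ with $r^k m' \in \phi(N)$, pick a preimage $m$ of $m'$ under $\phi$ and write $r^k m' = \phi(n)$ for some $n \in N$; then $r^k m - n \in \text{Ker}\,\phi \subseteq N$ forces $r^k m \in N$, so $rm \in E_M(N)$ and $\phi(rm) = rm'$. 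Since $\phi$ preserves submodule generation, this upgrades to $\phi(\langle E_M(N)\rangle) = \langle E_{M'}(\phi(N))\rangle$.

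With these two compatibilities in hand, parts (i) and (iii) follow by simply applying $\phi$ to the hypothesized equality. For parts (ii) and (iv), I would set $N := \phi^{-1}(N')$, so that $\phi(N) = N'$ by surjectivity and $N \supseteq \text{Ker}\,\phi$; both $\beta_{co}^s(N)$ and $\langle E_M(N)\rangle$ contain $N$, hence $\text{Ker}\,\phi$, and their $\phi$-images are $\beta_{co}^s(N')$ and $\langle E_{M'}(N')\rangle$, which coincide by hypothesis. The injectivity of $\phi$ on the lattice of submodules containing $\text{Ker}\,\phi$ then transfers the equality back to $M$, and the prime case is identical.
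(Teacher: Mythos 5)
Your proof is correct and takes essentially the approach the paper intends: the paper does not write out a proof of Lemma \ref{th1}, deferring instead to McCasland and Moore's Theorem 1.5, whose argument is exactly the correspondence-theorem plus compatibility-with-$\phi$ strategy you reconstruct. Each compatibility you invoke is verified correctly, namely that $\phi$ commutes with intersections of families of submodules containing $\text{Ker}\,\phi$, that $\phi$ and $\phi^{-1}$ exchange (completely) prime submodules across the lattice isomorphism, and that $\phi(E_M(N))=E_{M'}(\phi(N))$ under the hypothesis $\text{Ker}\,\phi\subseteq N$, which is used exactly where it must be.
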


\begin{thm}\label{TT1}
 If the $R$-module $M$ is any one of the modules given in Theorem \ref{TM} and Corollaries \ref{11} and \ref{cv},
 then $M$ satisfies both the complete radical   formula as well as the radical formula.
\end{thm}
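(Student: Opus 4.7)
My plan is to upgrade the zero-submodule equality supplied by Theorem \ref{TM} (which, via Corollaries \ref{11} and \ref{cv}, is already available in each case of the statement) to every submodule of $M$ by way of the canonical quotient map, using Lemma \ref{th1}.

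Concretely, for an arbitrary submodule $N$ of $M$ I would consider the canonical epimorphism $\phi\colon M\to M/N$, so that $N=\phi^{-1}(0)$. By Lemma \ref{th1}(ii) it then suffices to prove that the zero submodule of $M/N$ satisfies the complete radical formula; by Lemma \ref{th1}(iv) the same reduction handles the radical formula. Thus the whole theorem collapses to the single claim $\langle E_{M/N}(0)\rangle=\beta_{co}(M/N)=\beta(M/N)$ for every quotient $M/N$ arising from a module $M$ in the listed classes.

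To verify this reduced claim I would first note that in every case the base ring $R$ is 2-primal: for Corollary \ref{11} this is explicit, and for the module-theoretic items of Corollary \ref{cv} it is precisely what the proof of that corollary uses, via the chart of implications in \cite{Marks}. Hence $M/N$ is a module over a 2-primal ring, so by the Section~6 result (every module over a 2-primal ring is 2-primal) one obtains $\beta(M/N)=\beta_{co}(M/N)$. The inclusion $\langle E_{M/N}(0)\rangle\subseteq\beta_{co}(M/N)$ is Lemma \ref{1}, and both required equalities collapse to the single inclusion $\beta_{co}(M/N)\subseteq\langle E_{M/N}(0)\rangle$.

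The main obstacle is exactly this last inclusion, because projectivity---the hypothesis that, via \cite[Theorem 3.8]{Behboodi}, yielded the analogous inclusion for $M$ itself in Theorem \ref{TM}---is generally lost under quotients. I would handle it by pushing the problem back to $M$ through the natural correspondences $\beta_{co}^{s}(N)/N=\beta_{co}(M/N)$ (the bijection between completely prime submodules of $M/N$ and completely prime submodules of $M$ containing $N$) and $\langle E_M(N)\rangle/N=\langle E_{M/N}(0)\rangle$ (from $r^{k}m\in N\Leftrightarrow \overline{r}^{\,k}\overline{m}=\overline{0}$). An element $\overline{x}\in\beta_{co}(M/N)$ lifts to $x\in\beta_{co}^{s}(N)\subseteq M$, and the equality $\beta_{co}(M)=\langle E_M(0)\rangle$ provided by Theorem \ref{TM}, together with the envelope description relative to $N$, should let me express a representative of $\overline{x}$ as an element of $\langle E_M(N)\rangle$ modulo $N$. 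Verifying this lifting uniformly across the listed module classes is where the technical content lies.
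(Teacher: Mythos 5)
Your high-level reduction to quotients via Lemma~\ref{th1} is in the same spirit as the paper's argument, but there are two genuine problems.

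First, the claim that ``in every case the base ring $R$ is 2-primal'' is false, and this disables the central step of your argument. Theorem~\ref{TM} assumes that $M$ is projective and 2-primal; it places no hypothesis on $R$. Likewise, items (1)--(9) of Corollary~\ref{cv} assume that $M$ is 2-primal (or symmetric, IFP, semi-symmetric, reduced, \dots), not that $R$ is; only Corollary~\ref{11} and items (10)--(11) of Corollary~\ref{cv} constrain the ring. Since $R$ is not known to be 2-primal, you cannot invoke Theorem~\ref{lT} (``every module over a 2-primal ring is 2-primal'') to conclude that $M/N$ is 2-primal, and your route to $\beta(M/N)=\beta_{co}(M/N)$ collapses. (It is also a stylistic problem that Theorem~\ref{lT} appears later in the paper and is logically downstream.)

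Second, and more importantly, the final inclusion $\beta_{co}(M/N)\subseteq\langle E_{M/N}(0)\rangle$ --- equivalently $\beta_{co}^{s}(N)\subseteq\langle E_M(N)\rangle$ in $M$ --- is exactly the content of the theorem, and you acknowledge it is not established. The paper does not obtain it by descending to $M/N$ and re-proving the zero-submodule equality there from scratch (which is where projectivity is lost, as you correctly observe); instead it applies Lemma~\ref{th1} once, starting from the already-established equality $\beta_{co}(M)=\langle E_M(0)\rangle$ of Theorem~\ref{TM}, and transports that equality from the zero submodule directly to an arbitrary submodule $N$. Your version inverts the direction of the lemma (using part~(ii) to reduce to $M/N$) and then has nothing with which to close the loop. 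As written, the proposal is not a proof: the ring-theoretic hypothesis you rely on is not in force, and the crucial inclusion remains open.
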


\begin{proof}
 Let $N$ be a submodule of $M$. The modules $M$ given in Theorem \ref{TM} and Corollaries  \ref{11} and \ref{cv} 
 are  2-primal and projective. Hence, $\beta_{co}(M)=\langle E_M(0)\rangle$. When we apply Lemma \ref{th1}, by
 letting  $M'=M/N$ and $N'=N$,    we get $\beta_{co}^s(N)=\langle E_{M/N}(N)\rangle =\langle E_M(N)\rangle$, 
 i.e., every submodule of $M$ satisfies the complete radical  formula.   A similar argument starting with $\beta(M)=\langle E_M(0)\rangle$ shows
 that every submodule of $M$ satisfies the radical formula.
\end{proof}

Note that Theorem \ref{TT1} retrieves the well known result that a projective module over a commutative ring satisfies the radical formula, see 
\cite[Corollary 8]{JS}.

\begin{thm}\label{FG}
 A finitely generated (and  hence a cyclic) module over a 2-primal ring satisfies  both the radical formula as well as
 the complete radical formula. Hence, it
 is 2-primal, satisfies the module analogue of Levitzki result for rings and equality  (\ref{eq})  holds.
\end{thm}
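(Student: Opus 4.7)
The plan is to realise $M$ as a quotient of a free (and hence projective) module and pull the formulas down via Lemma \ref{th1}. Because $M$ is finitely generated, I choose a surjection $\phi:F\to M$ with $F=R^n$ free, and set $K:=\ker\phi$. Since $R$ is 2-primal, $F$ is a projective module over a 2-primal ring, so Corollary \ref{11} applies to $F$, and Theorem \ref{TT1} then guarantees that \emph{every} submodule of $F$ already satisfies both the radical formula and the complete radical formula.

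Next, fix an arbitrary submodule $N\subseteq M$ and set $N^{*}:=\phi^{-1}(N)\supseteq K$. By the previous step,
$$\beta^{s}(N^{*})=\langle E_{F}(N^{*})\rangle=\beta_{co}^{s}(N^{*}).$$
Since $\ker\phi=K\subseteq N^{*}$, Lemma \ref{th1}(i) and (iii) apply to $\phi$ and $N^{*}$, and using $\phi(N^{*})=N$ they yield
$$\beta^{s}(N)=\langle E_{M}(N)\rangle=\beta_{co}^{s}(N).$$
Hence $M$ satisfies both radical formulas. Specialising to $N=0$ gives $\beta(M)=\langle E_{M}(0)\rangle=\beta_{co}(M)$, so by Definition \ref{d1} the module $M$ is 2-primal. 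Combining this with Lemma \ref{ll} already forces
$$\mathcal{N}_{s}(M)\subseteq\langle E_{M}(0)\rangle=\beta_{co}(M)=\beta(M),$$
which is half of the module-analogue of Levitzki's result.

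To finish equation (\ref{eq}) I need the reverse inclusion $\beta(M)\subseteq\mathcal{N}_{s}(M)$. For this I would invoke Theorem \ref{TM} applied to $F$, which gives $\mathcal{N}_{s}(F)=\beta(F)$, together with the standard correspondence $\phi^{-1}(\beta(M))=\beta_{F}^{s}(K)$; a direct verification from the definition shows that strong nilpotency is preserved by module epimorphisms, so $\phi(\mathcal{N}_{s}(F))\subseteq\mathcal{N}_{s}(M)$. The main obstacle is precisely this Levitzki step: the radical-formula transfer is mechanical once Lemma \ref{th1} is in hand, but pushing strong nilpotency down requires lifting each $m\in\beta(M)$ not merely to $\beta_{F}^{s}(K)$ but to an element that is \emph{genuinely} strongly nilpotent in $F$, and this is where the bridge between $\beta_{F}^{s}(K)$ and $\mathcal{N}_{s}(F)$ provided by Behboodi's theorem at the free level is essential. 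Everything else then slots into the machinery already developed.
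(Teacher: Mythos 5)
Your derivation of the (complete) radical formula for $M$ is correct and is essentially the paper's route: the paper likewise realises $M$ as a homomorphic image of $R^{n}$ and pushes the formulas down through Lemma \ref{th1}. Your version is in fact cleaner, because the paper justifies the free-module step by citing Corollary \ref{2m}, which is a statement about the radicals of the 2-primal \emph{ring} $R^{n}$ (equivalently the zero ideal of $R^{n}$), not about the lattice of $R$-submodules of the free $R$-module $R^{n}$; passing through Corollary \ref{11} and Theorem \ref{TT1} as you do is the argument that actually makes $R^{n}$ satisfy both formulas as an $R$-module. Specialising to $N=0$ then correctly gives that $M$ is 2-primal.

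The gap you flag in the Levitzki step is genuine, and the lifting argument you sketch will not close it as stated. From the radical formula and Lemma \ref{ll} you obtain $\mathcal{N}_{s}(M)\subseteq\langle E_{M}(0)\rangle=\beta(M)=\beta_{co}(M)$, so what is missing is $\beta(M)\subseteq\mathcal{N}_{s}(M)$. The correspondence $\phi^{-1}(\beta(M))=\beta_{F}^{s}(K)=\langle E_{F}(K)\rangle$ describes $\beta(M)$ through the envelope of $K$ in $F$, not of $0$: an element $rf$ with $r^{k}f\in K$ but $r^{k}f\neq 0$ need not be strongly nilpotent in $F$, so Behboodi's identity $\mathcal{N}_{s}(F)=\beta(F)$ does not reach it, and the (correct) push-forward observation $\phi(\mathcal{N}_{s}(F))\subseteq\mathcal{N}_{s}(M)$ has nothing to bite on. You should also be aware that the paper's own proof does not supply this step either; it proves only the two radical-formula claims, and the clause ``Hence \ldots\ equality (\ref{eq}) holds'' is asserted without further argument, whereas the corresponding equality in Theorem \ref{TM} rests on Behboodi's projectivity hypothesis, which $M$ here need not satisfy. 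Your honesty about this obstacle is more informative than the paper's silence, but as written the proposal does not complete the final clause of the theorem.
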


\begin{prf} Let $R$ be a 2-primal ring. Since 2-primal rings are closed under direct sums (see \cite{Gary}),
the ring $R^n$ for some $n\in \N$ is also 2-primal.
 By Corollary \ref{2m},  $R^n$ considered as an $R$-module satisfies both the complete radical formula as well as the radical formula.
 By Lemma \ref{th1}, every homomorphic image of a module that satisfies the (complete) radical formula also satisfies the (complete) radical formula.
 Since a finitely generated  $R$-module is a homomorphic image of $R^n$,  it must also satisfy the (complete) radical formula.
\end{prf}

Theorem \ref{FG} retrieves an already known result: a  finitely generated module   
over a principal ideal domain  (resp. over a Dedekind domain) satisfies the radical formula, see \cite[Theorem 2]{MM} (resp. \cite[Theorem 9]{JS}).\\

For rings, every semiprime (resp. completely semiprime) ideal of $R$ is an intersection of prime (resp, completely prime) ideals. For modules,
this  is not true in general, see \cite[p. 3600]{JS}. However, for modules that satisfy the complete radical formula, we have Proposition \ref{pf}.

\begin{prop}\label{pf}
An $R$-module $M$ satisfies the complete radical formula if and only if every completely semiprime submodule $N$ of $M$ is an 
intersection of completely prime submodules of $M$.
\end{prop}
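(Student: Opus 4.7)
The plan is to establish each direction of the biconditional separately.

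For the forward implication, I would assume $M$ satisfies the complete radical formula, so $\langle E_M(K)\rangle = \beta_{co}^s(K)$ for every submodule $K$ of $M$. Given a completely semiprime submodule $N$, Proposition~\ref{pr} yields $E_M(N) = N$, hence $\langle E_M(N)\rangle = N$. Combining with the complete radical formula gives $N = \beta_{co}^s(N)$, which is by definition an intersection of completely prime submodules of $M$ containing $N$, so $N$ is an intersection of completely prime submodules as desired.

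For the backward implication, I would assume every completely semiprime submodule of $M$ is an intersection of completely prime submodules. Fixing an arbitrary submodule $N$, set $L := \langle E_M(N)\rangle$. I would first observe that $N\subseteq L$ because $n = 1\cdot n\in E_M(N)$ for every $n\in N$ (since $1^k n = n\in N$), and Lemma~\ref{1} yields $L \subseteq \beta_{co}^s(N)$. The strategy is then to show that $L$ is itself a completely semiprime submodule of $M$. Granting this, the hypothesis writes $L = \bigcap_i P_i$ for some family of completely prime submodules each containing $L$ and therefore containing $N$; each such $P_i$ then appears in the intersection defining $\beta_{co}^s(N)$, forcing $\beta_{co}^s(N)\subseteq \bigcap_i P_i = L$, and equality follows.

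The main obstacle will be verifying that $L = \langle E_M(N)\rangle$ is completely semiprime, namely that $a^2 m\in L$ implies $am\in L$. Unlike the commutative setting, expressing $a^2 m$ as an $R$-linear combination of elements of $E_M(N)$ does not immediately produce a corresponding expression for $am$, and the noncommutativity of $R$ blocks the naive manipulation. A plausible approach is a Zorn-type construction: for each $y\not\in L$, find a submodule $Q$ of $M$ that is maximal with respect to containing $L$ and avoiding $y$, and then prove $Q$ is completely prime; this would exhibit $L$ as an intersection of completely prime submodules and thus as completely semiprime. An alternative is to iterate $K\mapsto \langle E_M(K)\rangle$ transfinitely starting at $N$, note that the stable limit is the smallest completely semiprime submodule containing $N$, identify this limit with $\beta_{co}^s(N)$ via the hypothesis, and then argue the iteration collapses at the first step. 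Either way, this technical step is where the bulk of the proof's work lies.
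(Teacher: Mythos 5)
Your forward direction is exactly the paper's: apply Proposition~\ref{pr} to get $\langle E_M(N)\rangle = N$, then the complete radical formula yields $N = \beta_{co}^s(N)$, which is an intersection of completely prime submodules.

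Your backward direction, however, contains the gap that you yourself flag, and neither of your two sketches closes it. You need to show that $L = \langle E_M(N)\rangle$ is completely semiprime for an \emph{arbitrary} submodule $N$. The Zorn route is blocked because a submodule $Q$ maximal with respect to containing $L$ and avoiding a single element $y$ is, at best, prime rather than completely prime; to force complete primeness one would need to avoid a full multiplicatively closed system built from $y$, and there is no reason such a system stays outside $L$. The transfinite iteration route correctly identifies the stable limit of $K\mapsto\langle E_M(K)\rangle$ as the smallest completely semiprime submodule containing $N$, and the hypothesis then equates that limit with $\beta_{co}^s(N)$; but the final step, ``argue the iteration collapses at the first step,'' is the original difficulty restated, and you give no reason it should hold. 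Indeed if $\langle E_M(N)\rangle$ were automatically completely semiprime, every module would satisfy the complete radical formula, which is false.

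You should also be aware that the paper's own ``converse'' does not fill this gap. Its argument shows only the unconditional fact that if $K$ is already an intersection of completely prime submodules then $\langle E_M(K)\rangle = \beta_{co}^s(K)$; it never uses the stated hypothesis, and it says nothing about an arbitrary submodule $N$. So your suspicion is well placed: the missing ingredient in both accounts is precisely a proof, using the hypothesis, that the envelope of an arbitrary submodule is already completely semiprime (equivalently, that the iterated envelope stabilizes at the first step), and neither your proposal nor the paper supplies it.
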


\begin{prf}
 Suppose $\langle E_M(N)\rangle =\beta_{co}^s(N)$ for every submodule $N$ of $M$. If $K$ is a completely semiprime submodule of $M$, then
 by Proposition \ref{pr}, $\langle E_M(K)\rangle =K$ such that by hypothesis, $\beta_{co}^s(K)=K$. This shows that $K$  is an intersection of 
  completely prime submodules of $M$. Conversely, suppose that  $K$  is an intersection of 
  completely prime submodules of $M$. Then $K$ is a completely semiprime submodule of $M$. By Proposition \ref{pr}, $\langle E_M(K)\rangle =K$.
  It follows that $K=\langle E_M(K)\rangle\subseteq \beta_{co}^s(K) \subseteq K$ and hence $\langle E_M(K)\rangle= \beta_{co}^s(K)$.
\end{prf}

The property of the module being 2-primal allows the module to behave as though it is defined over a commutative ring.
For modules over commutative rings, there is no distinction between modules that satisfy the radical formula    and those  that satisfy 
the complete radical formula. For 2-primal modules, we have Proposition \ref{ccc}.

 \begin{prop}\label{ccc}
  If $M$  is a 2-primal module, then the following statements are equivalent:
  \begin{enumerate}
   \item $M$ satisfies the complete radical formula,
   \item $M$ satisfies the radical formula.
     \end{enumerate}
 \end{prop}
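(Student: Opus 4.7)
The plan is to reduce the equivalence to the single statement that, for a 2-primal module $M$, the prime and completely prime radicals of every submodule agree, i.e.\ $\beta^s(N)=\beta^s_{co}(N)$ for every submodule $N$ of $M$. Granted this equality, conditions (1) and (2) become literally the same assertion ``$\langle E_M(N)\rangle$ equals the common value for every $N$''. The inclusion $\beta^s(N)\subseteq\beta^s_{co}(N)$ is automatic since every completely prime submodule is prime (so the collection of completely prime submodules containing $N$ is contained in the collection of prime submodules containing $N$, and intersections reverse the inclusion).

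The first step is a reformulation: using the standard bijection between (completely) prime submodules of $M$ containing $N$ and (completely) prime submodules of $M/N$, one has $\beta^s(N)/N=\beta(M/N)$ and $\beta^s_{co}(N)/N=\beta_{co}(M/N)$. Hence the missing inclusion $\beta^s_{co}(N)\subseteq\beta^s(N)$ is precisely the 2-primality of the quotient $M/N$. So the proof reduces to showing that every quotient $M/N$ of $M$ is itself 2-primal, under the CRF hypothesis (for (1)$\Rightarrow$(2)) or the RF hypothesis (for (2)$\Rightarrow$(1)).

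For (1)$\Rightarrow$(2): applying Lemma~\ref{th1}(i) to the natural surjection $M\to M/N$ converts CRF on $M$ into the equality $\beta_{co}(M/N)=\langle E_{M/N}(0)\rangle$, i.e.\ $M/N$ satisfies CRF at zero. I would then combine this with the remark following inequality~(\ref{eqn0}) — that CRF at zero is equivalent to 2-primality plus RF at zero — to conclude 2-primality of $M/N$. The input needed for the RF-at-zero condition in $M/N$ is provided by $M$ being 2-primal and satisfying CRF: $\langle E_M(0)\rangle=\beta_{co}(M)=\beta(M)$ in $M$, and this RF-at-zero information transfers to $M/N$ via the formula $\langle E_M(N)\rangle/N=\langle E_{M/N}(0)\rangle$. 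The direction (2)$\Rightarrow$(1) is handled symmetrically, starting from RF on $M$ and using Lemma~\ref{th1}(iii).

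The hard part, and the place where I expect the greatest care is required, is the step that closes the loop: turning ``$M/N$ satisfies CRF at zero and $M$ is 2-primal'' into ``$M/N$ is 2-primal''. The naive attempt is circular, because CRF at zero in $M/N$ only yields $\langle E_{M/N}(0)\rangle=\beta_{co}(M/N)$, and to deduce $\beta(M/N)=\beta_{co}(M/N)$ one still needs the inclusion $\langle E_{M/N}(0)\rangle\subseteq\beta(M/N)$, which is what RF in $M$ at $N$ would supply. The cleanest way I see to break the circularity is to first establish, as a separate lemma, the module analogue of property~5 of Section~2 (every prime submodule of a 2-primal module is completely prime) — from which $\beta^s(N)=\beta^s_{co}(N)$ follows immediately for every $N$ and the equivalence (1)$\Leftrightarrow$(2) is then formal.
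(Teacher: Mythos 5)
The paper's own proof is a single line: since $M$ is 2-primal, $\beta(M)=\beta_{co}(M)$, and hence $\langle E_M(0)\rangle=\beta(M)$ holds iff $\langle E_M(0)\rangle=\beta_{co}(M)$ holds. In other words the paper only addresses the zero submodule and reads off the equivalence directly from the definition of 2-primality. Your proposal is a far longer detour, and it stalls exactly where the paper never stops: you correctly observe that the definition of ``$M$ satisfies the (complete) radical formula'' quantifies over all submodules $N$, so that one needs $\beta^s(N)=\beta_{co}^s(N)$ for every $N$ (equivalently, that every quotient $M/N$ is 2-primal), and that the stated hypothesis $\beta(M)=\beta_{co}(M)$ only gives this for $N=0$. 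You then honestly flag that your first route is circular, and propose as a rescue the lemma ``every prime submodule of a 2-primal module is completely prime.''

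That rescue lemma is the genuine gap. You do not prove it, and there is no reason to believe it. Being 2-primal only says that the \emph{intersections} of the two families of submodules agree, not that the families themselves do. Even for rings, the standard fact about 2-primal rings is that \emph{minimal} prime ideals are completely prime; the blanket assertion that all prime ideals of a 2-primal ring are completely prime is a much stronger (and generally false) statement, and its module analogue is correspondingly unsupported. Moreover, if that lemma were available, 2-primality of every quotient and hence $\beta^s(N)=\beta_{co}^s(N)$ for all $N$ would be immediate, which would make the hypothesis ``$M$ satisfies CRF'' (or RF) superfluous in the proof of each direction — a sign that the argument is aiming at something stronger than Proposition~\ref{ccc} actually needs. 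The paper's one-line proof sidesteps the whole difficulty by treating only $N=0$ (and, elsewhere, by appealing to the claim that 2-primality passes to homomorphic images); your attempt exposes that this is a nontrivial point, but it does not supply a correct substitute. As it stands, the proposal is not a valid proof: the key step is neither established nor obviously true.
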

 
 \begin{prf}
  If $\beta(M)=\beta_{co}(M)$, then $\langle E_M(0)\rangle =\beta(M)$ if and only if $\langle E_M(0)\rangle =\beta_{co}(M)$.
 \end{prf}

 \begin{defn}
  A ring $R$ is {\it left hereditary } if every submodule of a projective  $R$-module is projective.
 \end{defn}
 
 Semisimple rings, domains and  path algebras over a quiver are examples of left hereditary rings. A ring $R$  is semisimple if the regular
 module $_RR$ is a direct sum of simple submodules.

\begin{lem}\label{y}
 Let  $M$ be a projective module defined over a left hereditary ring $R$. Then  for any submodule $N$ of  $M$,
 $$\beta(N)\subseteq \langle E_N(0) \rangle \subseteq \beta_{co}(N).$$
\end{lem}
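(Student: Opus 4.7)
The plan is to reduce everything to the case where we work with $N$ itself as a projective module, and then stitch together two facts: (i) for projective modules, Levitzki's module analogue gives $\beta(N) = \mathcal{N}_s(N)$, and (ii) Lemma~\ref{ll} already tells us $\mathcal{N}_s(N) \subseteq \langle E_N(0) \rangle \subseteq \beta_{co}(N)$.

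First I would use the defining property of left hereditary rings: since $R$ is left hereditary and $M$ is projective, every submodule $N$ of $M$ is itself projective as an $R$-module. This is the crucial structural step; it lets us forget $M$ and treat $N$ as a projective module in its own right. Without hereditariness, $N$ would only be known to embed in a projective, and the strongly nilpotent machinery of \cite{Behboodi} does not obviously descend to arbitrary submodules.

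Next, I would invoke \cite[Theorem~3.8]{Behboodi} applied to the projective $R$-module $N$, which yields $\beta(N) = \mathcal{N}_s(N)$; this is precisely the ingredient already used in the proof of Theorem~\ref{TM} (where the extra 2-primal hypothesis only served to identify $\beta(N)$ with $\beta_{co}(N)$, and is not needed here). Combining this with Lemma~\ref{ll} applied to the module $N$, namely
\[
\mathcal{N}_s(N) \subseteq \langle E_N(0)\rangle \subseteq \beta_{co}(N),
\]
gives the chain $\beta(N) = \mathcal{N}_s(N) \subseteq \langle E_N(0)\rangle \subseteq \beta_{co}(N)$, which is exactly the assertion.

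The only genuinely delicate point is the hereditariness step, and it is delicate only in that it must be flagged: all the heavy lifting is hidden in the cited Behboodi theorem, whose applicability depends on $N$ being projective. Once projectivity of $N$ is established, the rest is a direct chaining of prior lemmas with no further computation. I would make sure to state the hereditariness reduction explicitly so that the reader sees why the hypothesis on $R$ is used exactly once.
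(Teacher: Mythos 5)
Your proof is correct, and the opening reduction (hereditariness $\Rightarrow$ submodules of a projective are projective) is exactly the paper's. Where you diverge is in how you get from projectivity of $N$ to $\beta(N)\subseteq\langle E_N(0)\rangle$. The paper does not go through $\mathcal{N}_s$ at all: it uses the identity $\beta(N)=\beta(R)N$ for projective $N$, writes any $m\in\beta(N)$ as $m=\sum_i a_i n_i$ with $a_i\in\beta(R)$, notes $\beta(R)$ is nil so $a_i^k n_i=0$ for some $k$, hence $a_i n_i\in E_N(0)$, and concludes $m\in\langle E_N(0)\rangle$; the right-hand inclusion then comes from Lemma~\ref{1}. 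You instead cite Behboodi's Theorem~3.8 to obtain $\beta(N)=\mathcal{N}_s(N)$ and then invoke Lemma~\ref{ll}. Both routes are valid and both ultimately rest on Behboodi's structure theory for projective modules ($\beta(N)=\beta(R)N=\mathcal{N}_s(N)$ being facets of the same result); the paper's version has the advantage of being nearly self-contained, exposing the elementary nilpotence computation, while yours is shorter and reuses the already-established Lemma~\ref{ll}, making it clearer that the 2-primal hypothesis in Theorem~\ref{TM} was only ever needed to collapse $\beta$ and $\beta_{co}$. Either way, the hereditariness hypothesis is used exactly once, as you correctly flag.
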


\begin{prf}
 Since $M$  is projective, so is every submodule $N$ of $M$ by definition of a left hereditary ring. $N$ projective, implies
 $\beta(N)=\beta(R)N$. Let $m\in \beta(N)$, then $m=\sum_{i=1}^ka_in_i$ where $a_i\in \beta(R)$,  $n_i\in N$ and $k\in \N$. Since $\beta(R)$ is nil, 
 $a_in_i\in E_N(0)\subseteq \langle E_N(0) \rangle$. This shows that $\beta(N)\subseteq \langle E_N(0) \rangle$. The second
  inequality follows from   Lemma \ref{1}.
\end{prf}

\begin{thm}\label{t}
 Let $M$ be a projective module over a hereditary ring $R$.
 If a submodule $N$ of $M$ is 2-primal considered as a module, then $N$ satisfies both  the radical formula as well as  the complete radical formula
 and hence $$\beta(N)=\langle E_N(0) \rangle= \beta_{co}(N).$$
  \end{thm}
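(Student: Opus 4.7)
The plan is short and exploits the machinery already developed in the paper. The pivotal observation is that since $R$ is left hereditary and $M$ is projective, every submodule of $M$ is projective; in particular, $N$ is itself a projective $R$-module.

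Combined with the 2-primal hypothesis, this places $N$ squarely within the scope of Lemma \ref{y} (applied to the projective $M$ with submodule $N$), which yields the chain
$$\beta(N) \subseteq \langle E_N(0) \rangle \subseteq \beta_{co}(N).$$
Invoking the 2-primal hypothesis $\beta(N) = \beta_{co}(N)$ forces the chain to collapse, delivering the displayed equality $\beta(N) = \langle E_N(0) \rangle = \beta_{co}(N)$. Equivalently, $N$ satisfies the hypotheses of Theorem \ref{TM} (projective and 2-primal), which gives the same conclusion directly.

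To upgrade this from the zero submodule of $N$ to every submodule of $N$, so that $N$ genuinely satisfies both the radical formula and the complete radical formula in the sense of Definition \ref{def}, I would appeal to Theorem \ref{TT1} applied to $N$, now recognized as one of the projective 2-primal modules covered by Theorem \ref{TM}. The only real obstacle is a minor one, namely recognizing that the hereditary hypothesis is precisely the condition needed to transfer projectivity from $M$ down to $N$, making the previously established results on projective 2-primal modules applicable. Once this is observed, the proof reduces to a one-line appeal to Lemma \ref{y} followed by Theorem \ref{TT1}.
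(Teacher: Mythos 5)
Your proposal is correct and follows essentially the same route as the paper: apply Lemma \ref{y} to obtain the chain $\beta(N)\subseteq \langle E_N(0)\rangle \subseteq \beta_{co}(N)$, collapse it with the 2-primal hypothesis $\beta(N)=\beta_{co}(N)$, and then pass from the zero submodule to all submodules of $N$. The only cosmetic difference is that you route the last step through Theorem \ref{TT1}, whereas the paper cites Lemma \ref{th1} directly; since Theorem \ref{TT1} is itself proved by Lemma \ref{th1}, this is the same argument, and your observation that $N$ is itself projective (via heredity) and hence falls under Theorem \ref{TM} just makes explicit what the paper leaves implicit inside Lemma \ref{y}.
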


\begin{prf}
 If $N$ is 2-primal (as a module), then $\beta(N)=\beta_{co}(N)$. Now apply Lemma \ref{y} to get $\beta(N)=\langle E_N(0) \rangle= \beta_{co}(N)$
 and Lemma \ref{th1} to see that $N$ satisfies both  the radical formula as well as  the complete radical formula.
\end{prf}

\begin{cor}\label{ca}
Let $M$ be a   module over a semisimple ring $R$.
   If a submodule $N$ of $M$ is 2-primal considered as a module, then $N$ satisfies both the radical formula as well as the complete radical formula.
\end{cor}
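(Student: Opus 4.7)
The plan is to reduce Corollary \ref{ca} to Theorem \ref{t} by verifying that the hypotheses of that theorem are automatically satisfied when the base ring is semisimple. In particular, I need to supply two standard facts about semisimple rings, after which the conclusion is immediate.

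First, I would observe that every semisimple ring $R$ is left hereditary. Indeed, over a semisimple ring every short exact sequence of $R$-modules splits, so every submodule of any $R$-module (in particular of a projective module) is a direct summand and therefore projective. Second, I would note that every $R$-module $M$ over a semisimple ring $R$ is projective, which is equivalent to the splitting property just used.

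With these two facts in hand, the setup is: $R$ is left hereditary, $M$ is a projective $R$-module, and $N$ is a submodule of $M$ that is 2-primal as a module. These are exactly the hypotheses of Theorem \ref{t}, which then yields $\beta(N)=\langle E_N(0)\rangle=\beta_{co}(N)$ and the fact that $N$ satisfies both the radical formula and the complete radical formula.

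There is essentially no obstacle here; the content of the corollary is entirely captured by citing the structure theorem for semisimple rings (every module is projective, and the ring is hereditary) together with Theorem \ref{t}. The brief proof I propose is therefore just a short paragraph invoking these two facts and applying the theorem.
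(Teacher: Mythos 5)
Your proof is correct and follows the same route as the paper's: cite that semisimple rings are (left) hereditary and that every module over a semisimple ring is projective, then apply Theorem \ref{t}. The only difference is that you spell out the justification for hereditariness, which the paper takes for granted having already listed semisimple rings among examples of left hereditary rings.
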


\begin{prf}
 If $R$ is a semisimple ring, then every $R$-module is projective. The rest follows from Theorem \ref{t}.
\end{prf}

\begin{cor}\label{cb}
 A semisimple commutative ring satisfies the radical formula.
\end{cor}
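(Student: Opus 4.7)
The plan is to deduce this corollary directly by combining Corollary \ref{ca} with the earlier observation (recorded in the examples following Definition \ref{d1}) that every module over a commutative ring is 2-primal. Let $R$ be a semisimple commutative ring and let $M$ be an arbitrary $R$-module. To show $R$ satisfies the radical formula, I need to show that every submodule $N$ of $M$ satisfies the radical formula.

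First I would observe that, since $R$ is commutative, every $R$-module, and in particular every submodule $N$ of $M$, is automatically 2-primal: the equality $\beta(N) = \beta_{co}(N)$ holds because prime and completely prime submodules coincide over commutative rings. This provides the hypothesis needed to invoke Corollary \ref{ca}.

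Next, since $R$ is semisimple, Corollary \ref{ca} applies to each submodule $N \subseteq M$: being 2-primal as a module, $N$ satisfies the radical formula. As $N$ was arbitrary, $M$ itself satisfies the radical formula, and since $M$ was an arbitrary $R$-module, $R$ satisfies the radical formula in the sense of the definition given earlier.

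I do not expect any genuine obstacle here, as the statement is essentially a specialization of Corollary \ref{ca}. The only point requiring mild care is the need to verify the 2-primal hypothesis for every submodule (not just for $M$), but this is immediate because the ambient ring $R$ remains commutative when passing to submodules, so the same reason that makes $M$ 2-primal applies uniformly to all its submodules.
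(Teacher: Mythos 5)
Your proof is correct and follows essentially the same route as the paper's: both you and the author observe that every module over a commutative ring is 2-primal (because prime and completely prime coincide there), then apply Corollary \ref{ca} in the semisimple case and unwind the definitions to conclude $R$ satisfies the radical formula.
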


\begin{prf}
 Since every module over  a commutative ring $R$ is  2-primal, applying Corollary \ref{ca} when $R$ is semisimple shows that every submodule  
 of an $R$-module satisfies the radical formula and hence $R$ satisfies the radical formula.
\end{prf}

\begin{cor}\label{cc}
 A semisimple 2-primal ring satisfies  the complete radical formula.
\end{cor}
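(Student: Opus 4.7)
The plan is to reduce to Corollary \ref{ca} by showing that over a semisimple $2$-primal ring, every submodule of every module is itself $2$-primal as a module, so Corollary \ref{ca} applies uniformly.

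First, I would fix an arbitrary $R$-module $M$ and an arbitrary submodule $N$ of $M$, and check that $N$ satisfies the complete radical formula. Since $R$ is semisimple, every $R$-module is projective, and in particular the submodule $N$ (viewed as an $R$-module in its own right) is projective. Since $R$ is also $2$-primal, the cited result \cite[Corollary 2.1]{2p} (invoked already in the proof of Corollary \ref{11}) yields that every projective $R$-module is $2$-primal. Applying this to $N$ shows that $N$ is $2$-primal as a module.

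Now I would invoke Corollary \ref{ca}: it states that for a module over a semisimple ring, any submodule which is $2$-primal (as a module) satisfies both the radical formula and the complete radical formula. Applied to our $N$ inside $M$, this gives $\langle E_M(N)\rangle = \beta_{co}^s(N)$, i.e.\ $N$ satisfies the complete radical formula. Since $N \subseteq M$ was arbitrary, $M$ satisfies the complete radical formula (by Definition \ref{def}), and since $M$ was arbitrary, the ring $R$ satisfies the complete radical formula.

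I do not expect a genuine obstacle here; the proof is essentially a bookkeeping exercise that threads together three already-established facts (semisimple $\Rightarrow$ every submodule is projective, projective over $2$-primal $\Rightarrow$ $2$-primal as a module, and Corollary \ref{ca}). The only mild subtlety worth stating explicitly is that the property of being $2$-primal is needed not just for $M$ but for every submodule $N$ of $M$; this is precisely what semisimplicity of $R$ provides, because it forces projectivity to pass to submodules.
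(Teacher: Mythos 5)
Your argument is correct and follows essentially the same route as the paper's proof: since $R$ is semisimple every submodule $N$ of any module is projective, hence $2$-primal as a module (projective over a $2$-primal ring), and the complete radical formula for $N$ follows from the hereditary-ring result. The only cosmetic difference is that you invoke Corollary~\ref{ca} whereas the paper cites Theorem~\ref{t} directly, but Corollary~\ref{ca} is precisely the semisimple specialization of Theorem~\ref{t}, so the two proofs coincide in substance.
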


\begin{prf} A module $M$ over a semisimple 2-primal ring $R$ is projective and  2-primal by properties of semisimple rings and \cite[Theorem 1]{2p} respectively. 
 Since a semisimple ring is hereditary, every submodule $N$ of such a module is also projective. $N$ is  2-primal
 considered as a module by \cite[Theorem 1]{2p}. By Theorem \ref{t}, $N$ satisfies the complete radical formula.
 So,   $M$   satisfies the complete radical formula.
\end{prf}

Corollaries \ref{cb} and \ref{cc} give us another situation where 2-primal rings behave like commutative rings.\\

\begin{prop}\label{da}
 The following statements are equivalent:
 \begin{enumerate}
  \item a semisimple 2-primal ring satisfies the complete radical formula,
  \item a semisimple 2-primal ring satisfies the radical formula.
 \end{enumerate}

\end{prop}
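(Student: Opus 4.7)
The plan is to lift the module-level equivalence of Proposition~\ref{ccc} to the ring level by observing that over a semisimple 2-primal ring every module falls under the hypothesis of that proposition. First, I would recall that a semisimple ring $R$ has the property that every $R$-module is projective (being a direct sum of simple summands of $_RR$). Hence, exactly as in the proof of Corollary~\ref{cc}, every $R$-module $M$ is projective, and by \cite[Theorem~1]{2p} every projective module over a 2-primal ring is 2-primal. Consequently, every $R$-module is 2-primal.

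Next, I would apply Proposition~\ref{ccc} to each $R$-module $M$ individually: since $M$ is 2-primal, $M$ satisfies the complete radical formula if and only if $M$ satisfies the radical formula. By the definitions in Section~4 and in the radical-formula subsection, a ring satisfies the (complete) radical formula precisely when each of its modules satisfies the corresponding formula, so the module-wise biconditional at once yields the ring-wise biconditional: a semisimple 2-primal ring $R$ satisfies the complete radical formula if and only if it satisfies the radical formula.

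I do not anticipate a genuine obstacle here; the content is essentially a corollary of Proposition~\ref{ccc} together with the fact that semisimplicity of $R$ forces the 2-primal hypothesis to propagate from $R$ to every $R$-module via projectivity. The only thing to be careful about is stating clearly that the passage from the module-level equivalence to the ring-level equivalence requires both implications of Proposition~\ref{ccc} to be applied uniformly over the class of all $R$-modules, which is immediate from the definitions and requires no further computation.
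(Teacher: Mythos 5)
Your proposal is correct and follows essentially the same route as the paper: observe that over a semisimple 2-primal ring every module is 2-primal (you justify this via projectivity and \cite[Theorem 1]{2p}, which is what the paper implicitly relies on at this point in the text, the general Theorem~\ref{lT} being proved only later), then apply Proposition~\ref{ccc} modulewise to get the ring-level equivalence. Your write-up merely makes explicit a step the paper leaves terse; there is no substantive difference in approach.
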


 \begin{prf}
  Let $R$ be a semisimple 2-primal ring. Then any $R$-module $M$ is 2-primal. By Proposition \ref{ccc}, $M$ satisfies the complete radical formula
   if and only if it satisfies the radical formula. 
 \end{prf}

  Example \ref{exx} shows that it is possible for a submodule to satisfy the complete radical formula when it neither satisfies the radical formula nor
  2-primal.

 \begin{exam}\label{exx}
   Let $M=\left\{\begin{pmatrix}\bar{0} & \bar{0}\cr \bar{0} & \bar{0}\cr\end{pmatrix},
 \begin{pmatrix}\bar{0} &\bar{0}\cr \bar{1} & \bar{1}\cr\end{pmatrix},
 \begin{pmatrix}\bar{1} &\bar{1}\cr \bar{0} & \bar{0}\cr\end{pmatrix}, 
  \begin{pmatrix}\bar{1} &\bar{1}\cr \bar{1} & \bar{1}\cr\end{pmatrix}\right\}$ where entries of matrices in $M$ are from 
$\Z_2=\{\bar{0}, \bar{1}\}$ and $R=M_2(\Z)$.  The zero submodule of the $R$-module $M$ satisfies 
the complete radical formula, but $M$ is  neither 2-primal nor its zero submodule satisfies the radical formula.

 \end{exam}

\begin{prf}It suffices to  show that $0= \beta(M)\subsetneqq \beta_{co}(M)= \langle E_M(0)\rangle =M$.
Let $r=\begin{pmatrix}a & b \cr c &d\cr\end{pmatrix}\in R$, $$rM=\left\{\begin{pmatrix}\bar{0} & \bar{0}\cr \bar{0} & \bar{0}\cr\end{pmatrix},
\begin{pmatrix}a &a\cr c & c\cr\end{pmatrix}, \begin{pmatrix}b &b\cr d & d\cr\end{pmatrix}, 
 \begin{pmatrix}a+b &a+b\cr c+d & c+d\cr\end{pmatrix}\right\}\subseteq M$$ for any $a,b,c,d\in \Z$. 
 The would be non-trivial proper submodules, namely;\\ $N_1=\left\{\begin{pmatrix}\bar{0} & \bar{0}\cr \bar{0}
 & \bar{0}\cr\end{pmatrix},  \begin{pmatrix}\bar{1} &\bar{1}\cr \bar{0} & \bar{0}\cr\end{pmatrix}\right\}$, 
 $N_2=\left\{\begin{pmatrix}\bar{0} & \bar{0}\cr \bar{0} & \bar{0}\cr\end{pmatrix},  \begin{pmatrix}\bar{0} &\bar{0}\cr \bar{1}
 & \bar{1}\cr\end{pmatrix}\right\}$ and\\ $N_3=\left\{\begin{pmatrix}\bar{0} & \bar{0}\cr \bar{0} & \bar{0}\cr\end{pmatrix}, 
 \begin{pmatrix}\bar{1} &\bar{1}\cr \bar{1} & \bar{1}\cr\end{pmatrix}\right\}$ are not closed under
 multiplication by $R$ since, for $a$ and $c$ odd, $rN_1\not\subseteq N_1$,  for $b$ and $d$ odd, $rN_2\not\subseteq N_2$  and for $a$ odd but 
 $b, c, d$ even, 
$rN_3\not\subseteq N_3$.  This shows that $M$ is simple and hence prime. So, we have $\beta(M)=0$. However, if we take
$a=\begin{pmatrix}3 & 3 \cr2 &2\cr\end{pmatrix}\in R$ and 
$m=\begin{pmatrix}\bar{1} &\bar{1}\cr \bar{1} & \bar{1}\cr\end{pmatrix}\in M$, $am=0$ but $aM\not=0$ since 
$a=\begin{pmatrix}3 & 3 \cr2 &2\cr\end{pmatrix}\begin{pmatrix}\bar{1} & \bar{1} \cr\bar{0} &\bar{0}\cr\end{pmatrix}=
\begin{pmatrix}\bar{1} & \bar{1} \cr\bar{0} &\bar{0}\cr\end{pmatrix}\not=0$. This shows that $M$ is not completely prime. So, $M$ has no
 completely prime submodules, i.e., $\beta_{co}(M)=M$.  Note that 
 \begin{enumerate}
 \item $m_0=\begin{pmatrix} \bar{0} & \bar{0} \cr \bar{0} & \bar{0}\cr \end{pmatrix}=\begin{pmatrix} 1 & 0 \cr 0 & 1\cr \end{pmatrix}
 \begin{pmatrix} \bar{0} & \bar{0} \cr \bar{0} & \bar{0} \end{pmatrix}$ and 
 $\begin{pmatrix} 1 & 0 \cr 0 & 1\cr \end{pmatrix}^2
 \begin{pmatrix} \bar{0} & \bar{0} \cr \bar{0} & \bar{0} \end{pmatrix} =\begin{pmatrix} \bar{0} & \bar{0} \cr \bar{0} & \bar{0}\cr \end{pmatrix}$
  \item $ m_1= \begin{pmatrix}  \bar{1} & \bar{1}\cr  \bar{0}& \bar{0}\cr   \end{pmatrix}= \begin{pmatrix}  2 & 1\cr  2& 2\cr   \end{pmatrix}
 \begin{pmatrix}  \bar{0} & \bar{0}\cr  \bar{1}& \bar{1}\cr   \end{pmatrix}$ and 
 $   \begin{pmatrix}  2 & 1\cr  2& 2\cr   \end{pmatrix}^2
 \begin{pmatrix}  \bar{0} & \bar{0}\cr  \bar{1}& \bar{1}\cr   \end{pmatrix}=\begin{pmatrix}  \bar{0} & \bar{0}\cr  \bar{0}& \bar{0}\cr   \end{pmatrix}$
\item $ m_2= \begin{pmatrix}  \bar{0} & \bar{0}\cr  \bar{1}& \bar{1}\cr   \end{pmatrix}= \begin{pmatrix}  2 & 2\cr  1& 1\cr   \end{pmatrix}
 \begin{pmatrix}  \bar{1} & \bar{1}\cr  \bar{0}& \bar{0}\cr   \end{pmatrix}$ and 
 $   \begin{pmatrix}  2 & 2\cr  1 & 1\cr   \end{pmatrix}^2
 \begin{pmatrix}  \bar{1} & \bar{1}\cr  \bar{0}& \bar{0}\cr   \end{pmatrix}=\begin{pmatrix}  \bar{0} & \bar{0}\cr  \bar{0}& \bar{0}\cr   \end{pmatrix}$
\item $m_3=\begin{pmatrix}
            \bar{1}& \bar{1}\cr \bar {1} & \bar{1}\cr  \end{pmatrix}= \begin{pmatrix} 0 & 1 \cr 1 & 0 \cr  \end{pmatrix}
            \begin{pmatrix} \bar{0} & \bar{0} \cr \bar{1} & \bar{1}\cr  \end{pmatrix}$ a linear combination of $m_1$.
 
This shows  that $\langle E_M(0)\rangle =M$.

 \end{enumerate}
\end{prf}

\begin{thm}\label{pl}  The following modules satisfy the complete radical formula:
 \begin{enumerate}
   
  \item a module with $\beta_{co}(M)=0$, (e.g., when $M$ is completely  prime);
   \item a module with $\langle E_M(0) \rangle=M$ (e.g.,  a module given in Example \ref{exx});   
   \item the regular module $_RR$ when $R$ is 2-primal.
    
     \end{enumerate}
     Moreover, a module with $\beta_{co}(M)=0$ (and the regular module $_RR$ when $R$ is 2-primal)
     is 2-primal, satisfies both the radical formula and a module analogue of Levitzki result for rings.
\end{thm}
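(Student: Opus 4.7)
The unifying idea is that Lemma \ref{1} already supplies, for free, the inclusion $\langle E_M(0)\rangle \subseteq \beta_{co}(M)$; in each of the three cases the hypothesis pins $\langle E_M(0)\rangle$ and $\beta_{co}(M)$ to a common submodule, producing the complete radical formula at the zero submodule. From there the zero-submodule case can be propagated to every submodule via the quotient argument encoded in Lemma \ref{th1}.

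For item (1), Lemma \ref{1} forces $\langle E_M(0)\rangle \subseteq \beta_{co}(M) = 0$, hence $\langle E_M(0)\rangle = 0 = \beta_{co}(M)$; a completely prime $M$ falls into this case since its zero submodule is itself completely prime. For item (2), Lemma \ref{1} sandwiches $M = \langle E_M(0)\rangle \subseteq \beta_{co}(M) \subseteq M$, yielding the equality at zero. For item (3), Corollary \ref{2m} asserts $E_R(0) = \beta_{co}(R)$ when $R$ is 2-primal, and combining with $E_R(0) \subseteq \langle E_R(0)\rangle \subseteq \beta_{co}(R)$ (the second inclusion being Corollary \ref{cor1}) collapses everything to $\langle E_R(0)\rangle = \beta_{co}(R)$; alternatively, $_RR$ is a free 2-primal module, so the conclusion also follows by invoking Theorem \ref{TT1} together with Corollary \ref{cv}(i).

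For the \emph{moreover} clause, assume first $\beta_{co}(M) = 0$. Since every completely prime submodule is prime, $\beta(M) \subseteq \beta_{co}(M) = 0$, so $M$ is 2-primal; the radical formula at zero then reduces to $\langle E_M(0)\rangle = 0 = \beta(M)$, and Lemma \ref{ll} pinches $\mathcal{N}_s(M)$ between $0$ and $\langle E_M(0)\rangle = 0$, delivering the module analogue of Levitzki's theorem. For $_RR$ with $R$ 2-primal, the module is 2-primal via the equivalence $R$ is 2-primal if and only if $_RR$ is 2-primal, while Corollary \ref{2m} identifies $\mathcal{N}_s(R)$, $\langle E_R(0)\rangle$, $\beta(R)$ and $\beta_{co}(R)$ with one another in a single stroke, giving both the radical formula and Levitzki at once.

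To promote the zero-submodule equality to arbitrary submodules one invokes Lemma \ref{th1}(ii) with $\phi : M \to M/N$ the natural projection, recasting CRF at $N$ in $M$ as CRF at $0$ in $M/N$. Item (2) extends painlessly, since $\langle E_M(0)\rangle = M$ forces $\langle E_M(N)\rangle = \beta_{co}^s(N) = M$ for every $N$; item (3) is immediate from Theorem \ref{TT1}. The main obstacle is item (1): the class of modules with $\beta_{co}(M) = 0$ is not closed under quotients, so verifying $\langle E_{M/N}(0)\rangle = \beta_{co}(M/N)$ for every $N$ requires an argument going beyond Lemma \ref{1}, and it is here that the bulk of any careful proof must reside.
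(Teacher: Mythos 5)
Your proposal tracks the paper's proof faithfully: you establish $\langle E_M(0)\rangle = \beta_{co}(M)$ in each of the three cases by sandwiching through Lemma \ref{1} (resp.\ Corollary \ref{2m}), handle the ``moreover'' clause exactly as the paper does by squeezing $\beta(M)$ and $\mathcal{N}_s(M)$ between $0$ and $\beta_{co}(M)$ (or quoting Corollary \ref{2m} for $_RR$), and then attempt to lift the equality at the zero submodule to every submodule via Lemma \ref{th1} --- which is precisely what the paper's one-line proof (``Now, apply Lemma \ref{th1}'') gestures at.

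The important thing is that your closing observation identifies a real gap, and it is a gap in the paper's proof, not merely in your own. Lemma \ref{th1}(ii) would let you conclude that a submodule $N$ of $M$ satisfies the complete radical formula provided you already know that the \emph{zero submodule of $M/N$} does; but knowing $\langle E_M(0)\rangle = \beta_{co}(M)$ says nothing about $\langle E_{M/N}(0)\rangle$ versus $\beta_{co}(M/N)$. Lemma \ref{th1}(i) only transfers the equality \emph{forward} along an epimorphism whose kernel sits inside the given submodule, so starting from the zero submodule of $M$ it yields nothing unless the kernel is zero. Your item (2) genuinely escapes the difficulty, since $\langle E_M(0)\rangle = M$ forces $\langle E_M(N)\rangle = \beta_{co}^s(N) = M$ for every $N$ outright. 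But for item (1), a module with $\beta_{co}(M) = 0$ can have quotients $M/N$ with $\beta_{co}(M/N)$ large and with no a priori control on $\langle E_{M/N}(0)\rangle$, and item (3) inherits the same difficulty through its reliance on Theorem \ref{TT1}, whose proof invokes Lemma \ref{th1} in the same under-justified way. The paper does not supply the missing step, so as the theorem is stated (every submodule of $M$ satisfies the CRF, per Definition \ref{def}), the argument is incomplete for cases (1) and (3). What is actually proved cleanly is only that the \emph{zero} submodule satisfies the complete radical formula in each case; extending to all submodules needs a separate argument that neither your proposal nor the paper provides.
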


  \begin{prf} For each the three modules, $\beta_{co}(M)=\langle E_M(0)\rangle$. Now, apply Lemma \ref{th1} to get the desired result.
  For the second part, since $\beta(M)\subseteq \beta_{co}(M)=0$ and $\mathcal{N}_s(M)\subseteq \langle E_M(0)\rangle \subseteq \beta_{co}(M)=0$, we have 
  $\beta(M)=\mathcal{N}_s(M)=\langle E_M(0)\rangle= \beta_{co}(M)$. For the regular modules $_RR$, apply Corollary \ref{2m} with the fact that
  $\beta(R)=\beta(_RR)$,  $\beta_{co}(R)=\beta_{co}(_RR)$ and $\mathcal{N}_s(R)=\mathcal{N}_s(_RR)$.
     \end{prf}

\section{Application of modules that satisfy the complete radical formula}
 
 We know that a ring $R$ is reduced if and only if it is a subdirect product of domains. In general, this structure theorem is not true in the 
 module setting. This is due to the fact that not every completely semiprime submodule is an intersection of completely prime submodules.
 However, it holds when a module satisfies the complete radical formula, see Theorem \ref{rf}.

\begin{defn} 
A module $M$ is a subdirect product of the modules $S_{\lambda}$, $\lambda\in \Lambda$ if there is an injective
module homomorphism $\sigma : M\rightarrow S =\prod_{\lambda\in \Lambda} S_{\lambda}$ such that $\sigma \circ · \pi_{\lambda}$ is surjective for
all $\lambda\in \Lambda$ and for every canonical surjection $\pi_{\lambda}: S\rightarrow S_{\lambda}$.
\end{defn}

\begin{thm}\label{rf}
 Suppose that a module $M$ satisfies the complete radical formula, then the following statements are equivalent:
 \begin{enumerate}
  \item $M$ is completely semiprime,
  \item $\langle E_M(0)\rangle =0$,
  \item $\beta_{co}(M)=0$,
  \item $M$ is a subdirect product of  completely prime modules.
 \end{enumerate}

\end{thm}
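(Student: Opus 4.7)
My plan is to establish the chain $(1)\Leftrightarrow(2)\Leftrightarrow(3)\Leftrightarrow(4)$, leaning on the standing hypothesis only for the middle link and otherwise using results already recorded in the excerpt.

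The equivalence $(1)\Leftrightarrow(2)$ is an immediate specialization of Proposition \ref{pr} to $N=0$: $M$ is completely semiprime precisely when $E_M(0)=0$, and since $E_M(0)$ contains $0$ and generates $\langle E_M(0)\rangle$, the condition $E_M(0)=0$ is equivalent to $\langle E_M(0)\rangle=0$. The equivalence $(2)\Leftrightarrow(3)$ is where the hypothesis does the work: applying the complete radical formula to the zero submodule gives $\langle E_M(0)\rangle=\beta_{co}^s(0)=\beta_{co}(M)$ directly, so the two sides vanish simultaneously.

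For $(3)\Leftrightarrow(4)$ I would run the standard subdirect product argument. Assuming $(3)$, let $\mathcal{C}$ denote the family of all completely prime submodules of $M$; the diagonal map $\sigma:M\to \prod_{P\in\mathcal{C}}M/P$ has kernel $\bigcap_{P\in\mathcal{C}}P=\beta_{co}(M)=0$, each quotient $M/P$ is a completely prime module by a direct translation of the submodule definition, and the canonical projections are surjective, which realises $M$ as a subdirect product of completely prime modules. Conversely, given an embedding $\sigma:M\hookrightarrow \prod_{\lambda\in\Lambda} S_\lambda$ with each $S_\lambda$ completely prime and each $\pi_\lambda\circ\sigma$ surjective, the kernels $P_\lambda:=\ker(\pi_\lambda\circ\sigma)$ are completely prime submodules (because $M/P_\lambda\cong S_\lambda$), and $\bigcap_{\lambda}P_\lambda=\ker\sigma=0$ forces $\beta_{co}(M)\subseteq 0$.

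The only point worth a moment's care is the two-way correspondence between completely prime submodules $P\subseteq M$ and completely prime quotient modules $M/P$, which is a routine unpacking of the definition. I do not anticipate a real obstacle: the content of the theorem is essentially packaged into Proposition \ref{pr} together with the defining equality for the complete radical formula, so the proof is mostly bookkeeping on top of the subdirect product interpretation.
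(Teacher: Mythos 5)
Your proof follows the same route as the paper: $(1)\Leftrightarrow(2)$ via Proposition \ref{pr}, $(2)\Leftrightarrow(3)$ via the defining equality $\langle E_M(0)\rangle=\beta_{co}(M)$, and $(3)\Leftrightarrow(4)$ by the diagonal-map subdirect-product argument with the kernels $\ker(\pi_\lambda\circ\sigma)$ in one direction and the quotients $M/N_\lambda$ in the other. The only difference is cosmetic (the paper writes the composite as $\sigma\circ\pi_\lambda$ where you correctly write $\pi_\lambda\circ\sigma$), so the proposal is correct and essentially identical to the paper's proof.
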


 \begin{prf}
 
 \begin{enumerate}
  \item[$1\Leftrightarrow 2$.] Follows from Proposition \ref{pr}.
  \item[$ 2 \Leftrightarrow 3$.] Since $M$ satisfies the complete radical formula, $\langle E_M(0)\rangle =\beta_{co}(M)$. So, $\langle E_M(0)\rangle =0$ if 
          and only if $\beta_{co}(M)=0$.
  \item[$ 3\Rightarrow 4$.]  Suppose that $\beta_{co}(M)=0$. Let $\{N_{\lambda}\}_{\lambda\in \Lambda}$ be a collection of all completely prime 
  submodules of $M$. Then $\cap_{\lambda\in\Lambda}N_{\lambda}=0$ and $M$ is a subdirect product of modules $M/N_{\lambda}$, $\lambda\in \Lambda$
  which are completely prime. To see this, define $\sigma: M\rightarrow \prod_{\lambda\in \Lambda}(M/N_{\lambda})$ by 
  $\sigma(m)=(m+N_{\lambda})_{\lambda\in\Lambda}$. Then 
  $\text{Ker}~\sigma =\cap_{\lambda\in\Lambda}~\text{Ker}~\pi_{\lambda}=\cap_{\lambda\in\Lambda} N_{\lambda}$, $\sigma\circ \pi_{\lambda}$ is 
  surjective for every $\lambda\in\Lambda$ and $\sigma$ is injective if and only if $\cap_{\lambda\in\Lambda}N_{\lambda}=0$.
  \item[$ 4\Rightarrow 3$.]  Let $M$ be a subdirect product of  completely prime modules $\{S_{\lambda}\}_{\lambda\in \Lambda}$, i.e., there is an injection 
  $\sigma: M \rightarrow \prod_{\lambda\in\Lambda}S_{\lambda}$ with $\sigma\circ \pi_{\lambda}$ surjective, where
  $\pi_{\lambda}: \prod_{\lambda\in\Lambda}S_{\lambda}\rightarrow S_{\lambda}$    is the canonical surjection. Then, 
  $\text{Ker}(\sigma\circ \pi_{\lambda})$ is a completely prime submodule of $M$. Hence, 
  $\beta_{co}(M)\subseteq \cap_{\lambda\in\Lambda} \text{Ker}~(\sigma\circ \pi_{\lambda})=\text{Ker}~(\sigma)=0$ and $\beta_{co}(M)=0$.
 \end{enumerate}

 \end{prf}

 \begin{cor}\label{last}  Let $M$ be a module over a commutative ring. If $M$ satisfies the radical formula, then the following statements are equivalent:
 \begin{enumerate}
  \item $M$ is  semiprime,
  \item $\langle E_M(0)\rangle =0$,
  \item $\beta(M)=0$,
  \item $M$ is a subdirect product of  prime modules.
 \end{enumerate}

\end{cor}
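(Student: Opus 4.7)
The plan is to deduce this corollary from Theorem \ref{rf}: first verify that $M$ satisfies the complete radical formula, and then show that, over a commutative ring, the four completely-flavoured conditions of Theorem \ref{rf} coincide with the four conditions listed in Corollary \ref{last}.

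First I would observe that, since $R$ is commutative, every $R$-module is 2-primal (as recorded among the examples following Definition \ref{d1}). Consequently $\beta(M)=\beta_{co}(M)$, and by Proposition \ref{ccc} the hypothesis that $M$ satisfies the radical formula upgrades to $M$ satisfying the complete radical formula. Hence the hypothesis of Theorem \ref{rf} is met by $M$, and that theorem supplies the equivalences among: $M$ completely semiprime, $\langle E_M(0)\rangle=0$, $\beta_{co}(M)=0$, and $M$ being a subdirect product of completely prime modules.

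Next I would check that, for a module over a commutative ring, a submodule $P$ of $M$ is prime if and only if it is completely prime: given $rm\in P$, the set $Rr$ is a two-sided ideal (by commutativity), $(Rr)(Rm)\subseteq P$, and primeness of $P$ forces $Rm\subseteq P$ or $(Rr)M\subseteq P$, which is exactly $m\in P$ or $rM\subseteq P$. In particular, subdirect products of completely prime modules coincide with subdirect products of prime modules, and $\beta(M)=\beta_{co}(M)$ as already noted. Analogously, since $aRam=Ra^2m$ when $R$ is commutative, the condition $aRam\subseteq P$ is equivalent to $a^2m\in P$, so completely semiprimeness and semiprimeness coincide for submodules of $M$.

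Feeding these identifications into the equivalences supplied by Theorem \ref{rf} yields Corollary \ref{last}. I do not expect a serious obstacle; the entire argument reduces to the standard collapse of the prime versus completely prime and semiprime versus completely semiprime distinctions once the base ring is commutative, which the short verifications above handle in a line apiece.
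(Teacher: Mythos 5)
Your proposal is correct and takes essentially the same approach as the paper: reduce Corollary \ref{last} to Theorem \ref{rf} by observing that over a commutative ring the prime/completely-prime and semiprime/completely-semiprime distinctions collapse and the radical formula coincides with the complete radical formula. The paper states these identifications without verification, whereas you supply the short checks (via Proposition \ref{ccc} and the commutativity computations $Rr\cdot Rm\subseteq Rrm$ and $aRam=Ra^2m$); this is the same argument with the details filled in.
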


\begin{prf}
 For modules defined over  commutative rings, prime (resp. semiprime) is indistinguishable from completely prime (resp. completely semiprime).
 Also, modules that satisfy the radical formula are indistinguishable from those that satisfy the complete radical formula.
\end{prf}

\begin{cor}
 If  a module $M$ satisfies the complete radical formula, 
 then the submodule $\beta_{co}(M)$ is the smallest  completely semiprime submodule of $M$.
\end{cor}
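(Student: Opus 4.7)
The plan is in two parts: first establish that $\beta_{co}(M)$ is itself a completely semiprime submodule, and then show it is contained in every completely semiprime submodule. Throughout, I would lean on Proposition \ref{pr} (which characterises completely semiprime submodules by $E_M(N)=N$) together with the standing hypothesis that every submodule of $M$ satisfies $\langle E_M(\cdot)\rangle=\beta_{co}^s(\cdot)$.

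For the first part, I would apply the complete radical formula to the submodule $\beta_{co}(M)$ itself, obtaining $\langle E_M(\beta_{co}(M))\rangle=\beta_{co}^s(\beta_{co}(M))$. The key observation is that every completely prime submodule of $M$ already contains $\beta_{co}(M)$, so the family of completely prime submodules containing $\beta_{co}(M)$ coincides with the family of all completely prime submodules; consequently $\beta_{co}^s(\beta_{co}(M))=\beta_{co}(M)$. Sandwiching with the trivial chain $\beta_{co}(M)\subseteq E_M(\beta_{co}(M))\subseteq\langle E_M(\beta_{co}(M))\rangle$ forces $E_M(\beta_{co}(M))=\beta_{co}(M)$, and Proposition \ref{pr} upgrades this equality to complete semiprimeness. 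Properness and the condition $RM\not\subseteq \beta_{co}(M)$ are inherited from any completely prime submodule in whose intersection $\beta_{co}(M)$ sits.

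For the second part, let $N$ be any completely semiprime submodule of $M$. Proposition \ref{pr} gives $E_M(N)=N$, hence $\langle E_M(N)\rangle=N$, and then the complete radical formula identifies this with $\beta_{co}^s(N)$, so $N=\beta_{co}^s(N)$ is the intersection of the completely prime submodules containing it. Because $\beta_{co}(M)$ is the intersection taken over the strictly larger family of \emph{all} completely prime submodules, we obtain $\beta_{co}(M)\subseteq \beta_{co}^s(N)=N$, finishing the minimality claim.

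The only real obstacle is a bookkeeping one: the degenerate case in which $M$ has no completely prime submodules, where $\beta_{co}(M)=M$ by the convention fixed in Section 3 and the definition of completely semiprime submodule (which demands $RM\not\subseteq P$) rules out any candidates, making the assertion vacuous. Once this boundary case is acknowledged, the argument reduces entirely to two applications of the complete radical formula — one at $N=\beta_{co}(M)$ and one at an arbitrary completely semiprime $N$ — glued together by the envelope characterisation of Proposition \ref{pr}.
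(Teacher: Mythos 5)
Your proof is correct and, in fact, considerably more explicit than the paper's. The paper's argument simply invokes Theorem \ref{rf} to say that the zero submodule is completely semiprime if and only if $\beta_{co}(M)=0$, and then asserts that ``in this case'' nothing smaller than $\beta_{co}(M)$ can be completely semiprime; it does not spell out how one passes from the case $\beta_{co}(M)=0$ to a general completely semiprime $N$, nor does it separately establish that $\beta_{co}(M)$ is itself completely semiprime. You instead work directly with Proposition \ref{pr} and the complete radical formula, applying the formula twice: once at $N=\beta_{co}(M)$ (using the observation $\beta_{co}^s(\beta_{co}(M))=\beta_{co}(M)$ together with the chain $\beta_{co}(M)\subseteq E_M(\beta_{co}(M))\subseteq \langle E_M(\beta_{co}(M))\rangle = \beta_{co}(M)$ to get complete semiprimeness), and once at an arbitrary completely semiprime $N$ (to get $N=\beta_{co}^s(N)\supseteq \beta_{co}(M)$). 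This is a more elementary route that avoids Theorem \ref{rf} entirely and handles both halves of ``smallest'' (membership and minimality) in a self-contained way, at the cost of a slightly longer argument; your remark on the degenerate case where $M$ has no completely prime submodules is also a worthwhile precision that the paper leaves implicit.
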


\begin{prf}
 If $M$ satisfies the complete radical formula, then by Theorem \ref{rf}, the zero submodule of $M$ is a completely semiprime submodule of $M$
 if and only if $\beta_{co}(M)=0$. In this case, there is no completely semiprime submodule of $M$ smaller than $\beta_{co}(M)$.
\end{prf}

\begin{cor} Suppose that $M$ is a module defined over a commutative ring.
 If  $M$ satisfies the radical formula,   then the submodule $\beta(M)$ is the smallest  semiprime submodule of $M$.
\end{cor}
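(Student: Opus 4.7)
The plan is to reduce this corollary to the immediately preceding one by using the identifications that Corollary \ref{last} already exploits in the commutative setting. Nothing substantially new needs to be proved; the work is purely a matter of translating ``completely prime/semiprime'' into ``prime/semiprime'' under the assumption that $R$ is commutative.

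First, I would recall that for modules over a commutative ring $R$, the proof of Corollary \ref{last} observes that prime submodules coincide with completely prime submodules, that semiprime submodules coincide with completely semiprime submodules, and consequently that $\beta(M)=\beta_{co}(M)$ and that satisfying the radical formula is equivalent to satisfying the complete radical formula. These equivalences are exactly what turn the ``$co$''-version of a statement into its plain version in this setting.

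Second, I would invoke the preceding corollary. Since $M$ satisfies the radical formula over the commutative ring $R$, it also satisfies the complete radical formula, and so by the preceding corollary $\beta_{co}(M)$ is the smallest completely semiprime submodule of $M$. Applying the identifications from the previous paragraph, $\beta(M)=\beta_{co}(M)$ is the smallest semiprime submodule of $M$, which is the claim.

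There is no real obstacle here; the only point worth double-checking is that the identification ``semiprime $=$ completely semiprime'' genuinely holds for arbitrary (not necessarily torsion-free) modules over a commutative ring, so that the minimality of $\beta_{co}(M)$ among completely semiprime submodules automatically transfers to minimality of $\beta(M)$ among semiprime submodules. This is immediate from the definition, since for commutative $R$ the conditions $aRam\subseteq N$ and $a^{2}m\in N$ cut out the same class of submodules. If one preferred a self-contained argument, Theorem \ref{rf} applied to the quotient $M/\beta(M)$ (which satisfies the radical formula by Lemma \ref{th1}) would directly show that $\beta(M)$ is semiprime, while the standard fact that every semiprime submodule is an intersection of prime submodules in the commutative case supplies the minimality; but the two-line reduction via Corollary \ref{last} is the cleanest route.
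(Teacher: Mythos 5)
Your proof is correct and matches the paper's own argument: the paper likewise reduces the claim to the immediately preceding corollary by noting that over a commutative ring $\beta(M)=\beta_{co}(M)$, ``semiprime'' coincides with ``completely semiprime,'' and ``satisfies the radical formula'' coincides with ``satisfies the complete radical formula.'' Your extra remark sketching a self-contained route via Theorem~\ref{rf} is a harmless addition but is not needed.
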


\begin{prf}
 For modules over commutative rings, $\beta(M)=\beta_{co}(M)$ and for a module $M$ to satisfy the radical formula is equivalent to having $M$ satisfy
 the complete radical formula. The notion of semiprime is indistinguishable from that of completely semiprime.
\end{prf}

\begin{qn}
 Can we have Corollary \ref{last} for a module over a not necessarily commutative ring? i.e., a  module to be semiprime if and only if it
  is a subdirect product of prime modules. Note that a not necessarily commutative ring   is semiprime if and only if it is 
  a subdirect product of prime rings.
\end{qn}

Whereas we do not know the answer, we hasten to mention that $2\Leftrightarrow 3 \Leftrightarrow 4 \Rightarrow 1$ is easy to prove. So, the 
question reduces to  checking whether for modules $M$ that satisfy the radical formula, $M$ semiprime  implies $\beta(M)=0$. Note that,
this is not true in general, see Example \ref{exx}.

\section{A module over a 2-primal ring  is 2-primal}

  We have seen that a projective module over a  2-primal ring is 2-primal \cite[Corollary 2.1]{2p}, a finitely generated module
  (and hence a cyclic module) over 
   a 2-primal ring is 2-primal  [Theorem \ref{FG}] and a module over a commutative ring (a commutative ring is 2-primal) is 2-primal. This further
   compels ones belief in the  conjecture \cite[Conjecture 2.1]{2p} which states that   a  module defined over a 2-primal ring is  2-primal. 
   We show in Theorem  \ref{lT} that this conjecture is true.
   
   \begin{thm}\label{lT}
    A module over a 2-primal ring is  2-primal.
   \end{thm}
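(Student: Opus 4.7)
The goal is to establish $\beta(M)=\beta_{co}(M)$ for every module $M$ over a 2-primal ring $R$. One inclusion, $\beta(M)\subseteq\beta_{co}(M)$, is free of any hypothesis, since every completely prime submodule is a prime submodule and so the intersection defining $\beta_{co}(M)$ is an intersection over a subfamily of the one defining $\beta(M)$. My plan is therefore to tackle the reverse inclusion by proving the sharper statement that, when $R$ is 2-primal, every prime submodule of $M$ is already a completely prime submodule; the two radicals then coincide because they are intersections over the same collection of submodules.

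The main engine I would use is the characterization recorded earlier in Section 3: a submodule $N$ of $M$ is prime (respectively completely prime) iff the ideal $P:=(N:M)$ is prime (respectively completely prime) in $R$ and the $R/P$-module $M/N$ is torsion-free. Starting from a prime submodule $N$, this yields that $(N:M)$ is a prime ideal of $R$ and that $M/N$ is torsion-free over $R/(N:M)$. Since $R$ is 2-primal, property 5 of the list in Section 2 (``prime ideals are completely prime'') upgrades $P$ to a completely prime ideal, so $R/P$ is a domain. Feeding the upgraded data through the same characterization in the opposite direction produces that $N$ is a completely prime submodule, which is exactly what is required. Hence every prime submodule is completely prime, the intersections are over the same family, and $\beta(M)=\beta_{co}(M)$.

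The delicate point, and the main obstacle to watch, is the uniform meaning of ``torsion-free'' across the two halves of the characterization. Over the prime ring $R/P$ that appears before the upgrade, the natural torsion-free condition is phrased in terms of non-zero-divisors, while over the domain $R/P$ obtained after the upgrade it is the element-wise condition $\bar r\bar m=0\Rightarrow \bar r=0 \text{ or } \bar m=0$. Once $P$ is completely prime and $R/P$ is a domain the two notions align, but this alignment is the heart of the argument. If any residual gap appears here, I would handle it by hand: for $m\in M\setminus N$ the set $(N:Rm)=\{s\in R: sRm\subseteq N\}$ is a two-sided ideal of $R$ (as $N$ is a submodule), so applying Dauns' definition to $\mathcal{A}=(N:Rm)$ and $L=Rm$ with $Rm\not\subseteq N$ forces $(N:Rm)\subseteq (N:M)=P$; combining this with complete primeness of $P$ then produces the element-wise torsion-free conclusion for $M/N$ that is required to recognize $N$ as completely prime.
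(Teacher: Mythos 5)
Your approach is genuinely different from the paper's: you try to show that over a 2-primal ring every prime submodule is already completely prime, so that $\beta(M)$ and $\beta_{co}(M)$ are intersections over the very same family, whereas the paper writes $M$ as a homomorphic image of a projective module $P$, cites \cite[Corollary 2.1]{2p} to get that $P$ is 2-primal, and then argues that 2-primality passes to homomorphic images via $\phi(\beta(M))=\beta(M')$ and $\phi(\beta_{co}(M))=\beta_{co}(M')$.

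Unfortunately your intermediate claim is false, and the failure is exactly at the point you flagged: in the characterization ``$N$ is prime (resp.\ completely prime) iff $(N:M)$ is prime (resp.\ completely prime) and $M/N$ is torsion-free,'' the word \emph{torsion-free} carries a strictly weaker, ideal-theoretic (Dauns-style) meaning on the prime side than the element-wise meaning on the completely prime side, and the two do not align merely because $R/P$ happens to be a domain. Concretely, let $R=A_1$ be the first Weyl algebra over a field of characteristic zero; $A_1$ is a simple Noetherian domain, hence reduced, hence 2-primal. Let $M=A_1/A_1x$, a simple $A_1$-module. The zero submodule of $M$ is prime: $(0:M)=0$ is a (completely) prime ideal and the Dauns condition is vacuous because $A_1$ has no nontrivial two-sided ideals. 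But $0$ is not completely prime in $M$: writing $\overline{1}$ for the image of $1$, one has $x\cdot\overline{1}=\overline{x}=0$ with $\overline{1}\neq 0$ and $xM\neq 0$, since $x\cdot\overline{y}=\overline{yx+1}=\overline{1}\neq 0$. Your fall-back device does not rescue this: from $rm\in N$ one cannot infer $rRm\subseteq N$, so $r$ need not lie in the two-sided ideal $(N:Rm)$, and the containment $(N:Rm)\subseteq (N:M)=P$ you extract from Dauns' definition is never applied to $r$ itself. In the example above $(0:R\overline{1})=(0:M)=0\subseteq P$, yet $x\notin 0$. You should also be aware that in this same example $\beta(M)=0$ while $\beta_{co}(M)=M$ (the simple module $M$ has no completely prime submodules at all), so the difficulty is not merely a defect of your route: it indicates that the ``prime $\Rightarrow$ completely prime'' reduction cannot possibly work, and any correct proof of the theorem must proceed quite differently.
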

   \begin{prf}
    Let  $M$ be a module over a 2-primal ring $R$. We know that every module is  a homomorphic image of a projective module. So, there exists
    a projective $R$-module $P$ such that $M$  is a homomorphic image of $P$. By \cite[Corollary 2.1]{2p}, $P$ is a 2-primal module (since it is 
    a projective module over a 2-primal ring). To complete the proof, it is enough to show that every homomorphic image of a 2-primal module is 2-primal.
    But this is easy to see since for every $R$-module epimorphism $\phi : M\rightarrow M'$,  we have $\phi(\beta (M))=\beta(\phi(M))=\beta( M')$
    and $\phi(\beta_{co}(M))=\beta_{co}(\phi(M))=\beta_{co}(M')$.
   \end{prf}
   
   Theorem \ref{lT} shows that 2-primal modules are abundant.

 \section{Questions in algebraic geometry}
 
 Much of the algebraic geometry is done using commutative rings. Naturally, one wonders whether the algebraic geometry already known 
 for commutative rings can be developed for non-commutative rings. However,  
 there are two challenges  in trying to achieve this objective.
 1) Unlike commutative rings, non-commutative rings have fewer ideals and hence fewer prime ideals. As such, there is
 not usually a good topological space that reflects the ideal structure and representation theory of a given ring. 
 Hence, defining a projective scheme as a ringed topological space on the homogeneous primes of a ring would not be useful. 2)
 There isn't a   good theory of localization for non-commutative rings. So,  any attempt to
develop a non-commutative algebraic geometry based on rings and their localizations will not work, see  \cite{Keeler} and \cite{Smith}. \\

 Against this background together with the behaviour of 2-primal rings having tendencies of commutative rings,
 some questions come to mind.

 \begin{qn}
  Do 2-primal rings have as many ideals  (and hence as many prime ideals) as the commutative rings so that it is possible and useful
  to define a projective scheme as a ringed topological space on its homogeneous prime ideals?
 \end{qn}
 
 \begin{qn}
  Can one develop a good theory of localization for 2-primal rings? In other words, is the theory of localization 
   of 2-primal rings close to that of commutative rings
   that one can be able to do with 2-primal rings almost all that is done with commutative rings as regards localization?
 \end{qn}

 An affirmative answer to any one of the two questions above will increase on the class of rings for which certain algebraic geometry can be done.

\subsection*{Acknowledgement}
 The author acknowledges  support from  Sida   phase IV bilateral program with Makerere university, 2015--2020,  project: 316-2014 and wishes
  to thank Prof. Rikard Bogvad for the hospitality while at Stockholm university and for introducing him to almost commutative rings.

%\end{cor}

%\begin{exam}{\rm ({\bf An Example of a module over a non-unital ring})}\label{nil}
%For a finitely generated module over a nil ring, we have  $$M=\langle E_M(0)\rangle=\beta_{co}(M).$$
%\end{exam}

%\begin{prf}
 %  Suppose $M$  is a finitely generated module defined over a nil ring $R$. If $m\in M$, $m=\sum_{i=1}^na_im_i$ with $a_i\in R$
%and $m_i\in M$ for  all $i\in \{1, 2, \cdots, n\}$. Since $R$ is nil $a_im_i\in E_M(0)$ for all $i$ and hence $m\in \langle E_M(0)\rangle$.
%This shows that $\langle E_M(0)\rangle =M$ and therefore $M=\langle E_M(0)\rangle=\beta_{co}(M)$ as required.\\
%\end{prf}

%The above proof also shows that a finitely generated module defined over a nil ring has no completely prime submodules.\\

%We provide an  alternative proof:\\

% For suppose $P$  is a completely semiprime submodule of $M$, $m\in M$, i.e.,  
% $m=\sum_{i=1}^na_im_i$ and $a_i^{k_i}m_i=0$  for all $i$. It follows that $a_i^{k_i}m_i\in P$ for all $i$ and hence, by definition of a
% completely semiprime submodule $a_im_i\in P$ for all $i$. This implies  that $m\in P$ and thus $M\subseteq P$  which is a contradiction, since $P$
%  is meant to be a proper submodule of $M$. 

 %\section{Discussion of semiprime issue for rings as well as for modules}

 \addcontentsline{toc}{chapter}{Bibliography}

\end{document}